\newtheorem{theorem}{Theorem}
\newtheorem{lemma}{Lemma}
\newtheorem{claim}{Claim}
\newcommand{\cartprod}{\hskip1pt \Box \hskip1pt}
\newcommand{\IND}{\mathop{\alpha}}
\newcommand{\dd}[1]{\textit{#1}}  
\begin{document}

\begin{center} \Large
\textbf{Graphs in which all maximal bipartite subgraphs\\[2mm] have the same order}
\end{center}\smallskip

\begin{center}
\textsuperscript{a}Wayne Goddard
\qquad
\textsuperscript{b}Kirsti Kuenzel
\qquad
\textsuperscript{a}Eileen Melville
\end{center}

\begin{center}
\textsuperscript{a}School of Mathematical and Statistical Sciences, 
Clemson University, Clemson, SC\\[2mm]
\textsuperscript{b}Department of Mathematics, Trinity College, Hartford, CT
\end{center}

\begin{abstract}
Motivated by the concept of well-covered graphs, we define a graph to be 
well-bicovered if every vertex-maximal bipartite subgraph has the same order
(which we call the bipartite number).
We first give examples of them, 
compare them with well-covered graphs, and characterize those with small
or large bipartite number. We then consider graph operations
including the union, join, and lexicographic and cartesian products. Thereafter we
consider simplicial vertices and $3$-colored graphs where every vertex is in triangle, 
and conclude by characterizing
the maximal outerplanar graphs that are well-bicovered.
\end{abstract}

\section{Introduction}

Plummer \cite{Plummer-1970} defined a graph to be \dd{well-covered} if every maximal 
independent set is also maximum. That is, a graph is well-covered if every maximal 
independent set has the same cardinality, namely the independence number~$\IND(G)$. 
Much has been written about these graphs. For example, Ravindra~\cite{Ravindra-1977} 
characterized well-covered bipartite graphs, Campbell, Ellingham, and 
Royle~\cite{CER-1993} characterized well-covered cubic graphs, and Finbow, Hartnell, 
and Nowakowski~\cite{FHN-1993} characterized well-covered graphs of girth $5$ or 
more.

Motivated by this idea, we define a graph to be \dd{well-bicovered} if every 
vertex-maximal bipartite subgraph has the same order. Equivalently, one can define the 
\dd{bipartite number} of a graph $G$, denoted $b(G)$, as the maximum cardinality of a 
bipartite induced subgraph in $G$. (We will henceforth just assume that subgraph 
means induced subgraph.)
Then, being well-bicovered means all maximal bipartite subgraphs have cardinality 
$b(G)$. The problem of finding a maximum bipartite subgraph is well-studied. For 
instance, Zhu~\cite{Zhu-2009} showed that any triangle-free subcubic graph $G$ with 
order $n$ has $b(G) \ge \frac{5}{7}n$, and the Four Color Theorem shows that $b(G)\ge 
n/2$ for any planar graph~$G$.

In this paper, we introduce and study well-bicovered graphs. We give examples and 
compare with well-covered graphs, and characterize well-bicovered graphs with small 
or large bipartite numbers. We then consider their relationship to graph operations 
including the union, join, and lexicographic and cartesian products. Thereafter we 
consider $3$-colorable graphs and simplicial vertices, and conclude by characterizing 
the maximal outerplanar graphs that are well-bicovered.

\subsection{Definitions and terminology}

Let $G = (V(G), E(G))$ be a simple, finite graph. The open neighborhood of a vertex 
$v \in V(G)$ is $N(v) = \{ \, x \in V(G) : xv \in E(G) \,\}$. The degree of $v \in V(G)$ is 
$\deg(v) = |N(v)|$, and the maximum degree of $G$ is denoted $\Delta(G)$. Given a set 
$X \subseteq V(G)$, we let $G[X]$ represent the subgraph induced by $X$. If 
$\deg_G(x) = 1$, we refer to $x$ as a leaf in $G$, and the edge incident with $x$ as 
a pendant edge. In general, a bridge is an edge whose removal increases the number of 
components.


\section{Examples of Well-Bicovered Graphs}

In this section, we construct examples of well-bicovered graphs and study 
well-bicovered graphs whose maximal bipartite subgraphs have a given cardinality.

Trivially, every bipartite graph is well-bicovered. So are the complete graphs and 
the cycles. Further, bridges are irrelevant, as adding or removing a bridge does not 
alter the property. In particular, note for example that adding a pendant edge to any 
well-bicovered graph results in a well-bicovered graph.

\subsection{The relationship to well-covered graphs}

While the concept of being well-bicovered was motivated by the concept of being 
well-covered, the two properties are distinct. In particular, neither property 
implies the other. For example, the path $P_3$ is well-bicovered but not 
well-covered. On the other hand, the graph $F$, obtained from $K_4 - e$ and adding a 
pendant edge to a vertex of degree $2$, is well-covered but not well-bicovered. The 
house graph $H$ ($C_5$ plus a chord) is both. See Figure~\ref{fig:fishHouse}.

\newpage

\begin{figure}[h!]
\begin{center}
\begin{tabular}{c}
\includegraphics{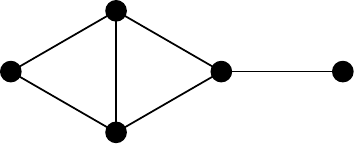}\\
$F$
\end{tabular}
 \qquad\qquad 
\begin{tabular}{c}
\includegraphics{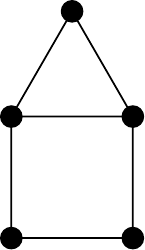}\\
$H$
\end{tabular}
\caption{Two well-covered graphs}
\label{fig:fishHouse}
\end{center}
\end{figure}  

The house graph and complete graph
have the property that their bipartite number is twice their independence number.
But there are also examples where this not the case. Two
such graphs are shown in Figure~\ref{fig:coronaRocket}.

\begin{figure}[h!]
\begin{center}
\begin{tabular}{c}
\includegraphics{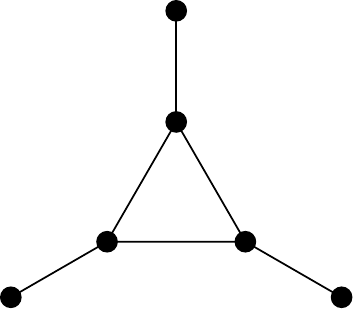}
\end{tabular}
\qquad\qquad
\begin{tabular}{c}
\includegraphics{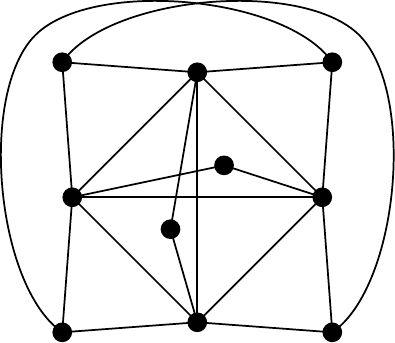}
\end{tabular}
\caption{Two well-covered and well-bicovered graphs $G$ with $b(G) < 2\IND(G)$}
\label{fig:coronaRocket}
\end{center}
\end{figure}

Though not equivalent to being well-bicovered, there is another ``bipartite subgraph'' property
that is more closely related to being well-covered, that we mention in passing. 
Let us define the ``\dd{weight}'' of a subgraph
as the sum of twice the number of isolated vertices plus the number
of nonisolated vertices. Then being well-covered implies
that every maximal bipartite subgraph has the same weight:

\begin{lemma} \label{l:weight}
If graph $G$ is well-covered, then the weight of every maximal 
bipartite subgraph is the same.
\end{lemma}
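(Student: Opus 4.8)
The plan is to prove the stronger statement that the weight of every maximal bipartite subgraph of $G$ equals $2\IND(G)$. Let $B = G[X]$ be an arbitrary maximal bipartite subgraph, let $I$ be its set of isolated vertices, and put $Y = X \setminus I$, so that $G[Y]$ is bipartite with no isolated vertices. I would fix a proper $2$-coloring of $G[Y]$ with color classes $A_0$ and $C_0$, and then set $A = A_0 \cup I$ and $C = C_0 \cup I$. Since $A_0$, $C_0$, $I$ are pairwise disjoint, the weight of $B$ is $2|I| + |Y| = (|A_0| + |I|) + (|C_0| + |I|) = |A| + |C|$, so it suffices to show that $A$ and $C$ are \emph{both} maximal independent sets of $G$; the well-covered hypothesis then forces $|A| = |C| = \IND(G)$, and the result follows.

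I would establish the claim for $A$, the case of $C$ being symmetric. That $A$ is independent is immediate: $A_0$ is a color class of $B$, and no vertex of $I$ has a neighbor inside $X \supseteq A_0$. For maximality, take any $v \notin A$. If $v \in X$, then $v \in C_0$; since $v$ is not isolated in $G[Y]$ it has a neighbor there, and that neighbor lies in $A_0 \subseteq A$ because $C_0$ is independent. If instead $v \notin X$, then maximality of $B$ gives that $G[X \cup \{v\}]$ contains an odd cycle $O$, which must pass through $v$ since $B$ is bipartite; let $p$ and $q$ be the two neighbors of $v$ on $O$. Removing $v$ from $O$ leaves a $p$–$q$ path inside $B$ with $|V(O)| - 2$ edges, an odd number. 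This path uses an edge, so $p$ and $q$ are non-isolated in $B$, hence both lie in $Y$; and having odd length, the path joins vertices of different colors. Therefore one of $p, q$ lies in $A_0 \subseteq A$, so $v$ has a neighbor in $A$. Hence $A$ is a maximal independent set.

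The one point requiring care is the parity computation in the last case: deleting a single vertex from an odd cycle produces a path of \emph{odd} length, and one must check that its endpoints are non-isolated in $B$ — so that the endpoint colored like $A_0 \cup I$ actually lands in $A_0$ rather than merely somewhere in $A$. Everything else is a routine interplay between $2$-colorings of $B$ and maximal independent sets of $G$.
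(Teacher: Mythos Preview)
Your proof is correct and follows essentially the same approach as the paper: split the maximal bipartite subgraph into isolates and a bipartition of the non-isolates, then show that the isolates together with either side of the bipartition form a maximal independent set, forcing the weight to equal $2\IND(G)$. Your odd-cycle argument for the case $v\notin X$ is a more explicit version of the paper's one-line assertion that such a $v$ must be adjacent to both parts of the bipartition.
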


\begin{proof}
Consider a maximal bipartite subgraph $B$. Let $X$ denote
the isolates of $B$ and let 
$(Y_1,Y_2)$ denote the bipartition of~$V(B)-X$.
The maximality condition means that adding to $B$ any other vertex $v$ produces
an odd cycle. This requires that vertex $v$ be adjacent to 
both a vertex of $Y_1$ and $Y_2$. Further, every vertex of $Y_1$ has
a neighbor in~$Y_2$ and vice versa, by the definition of $Y$. 
Thus, both $X \cup Y_1$ and 
$X\cup Y_2$ are maximal independent sets in $G$: that is 
$2|X|+|V(B)-X| = 2 \IND(G)$.
\end{proof}


\subsection{Classifying graphs based on their bipartite number}

First, we classify well-bicovered graphs with small bipartite number. 

\begin{lemma}
A connected graph $G$ is well-bicovered with bipartite number $2$ if and only if 
$G=K_n$ for $n \ge 2$.
\end{lemma}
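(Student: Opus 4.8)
The plan is to reduce the statement to the elementary fact that, among graphs on three vertices, the triangle $K_3$ is the only one that is not bipartite; equivalently, $b(G) \le 2$ holds precisely when every three vertices of $G$ induce a triangle, that is, when $G$ is complete. So in fact the hypothesis of being well-bicovered will turn out to be superfluous for the forward direction: connectivity together with $b(G)=2$ already forces $G=K_n$.

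For the forward direction, I would assume $G$ is connected with $b(G)=2$. Since $b(G)=2>1$, the graph $G$ contains an edge and hence has at least two vertices. I then claim $G$ is complete. If not, then $G$ contains a non-adjacent pair $u,v$, and since $G$ is connected (so a two-vertex non-edge is impossible) it must have a third vertex $w$; the induced subgraph $G[\{u,v,w\}]$ is $K_3$ with at least one edge deleted, hence bipartite, contradicting $b(G)=2$. Therefore $G=K_n$, and the presence of an edge forces $n\ge 2$.

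For the converse I would take $G=K_n$ with $n\ge2$ and simply identify its maximal bipartite induced subgraphs: every pair of vertices induces $K_2$, which is bipartite, while every triple induces $K_3$, which is not; hence the inclusion-maximal bipartite induced subgraphs are exactly the copies of $K_2$, all of order $2$. Thus $b(K_n)=2$ and $K_n$ is well-bicovered.

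The only place needing a little care is the bookkeeping for small orders — excluding $|V(G)|=1$, where $b(G)=1$, and noting that connectivity rules out a two-vertex graph with a non-edge — together with the (easy but essential) observation that $K_3$ is the unique non-bipartite graph on three vertices. Beyond that I do not anticipate any genuine obstacle.
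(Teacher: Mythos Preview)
Your proposal is correct and follows essentially the same approach as the paper: both directions argue that a connected non-complete graph contains a bipartite induced subgraph on three vertices (the paper invokes the existence of an induced $P_3$, while you take any triple containing a non-adjacent pair), forcing $b(G)\ge 3$. Your observation that the well-bicovered hypothesis is superfluous in the forward direction is also implicit in the paper's proof.
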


\begin{proof}
It is clear that if $G= K_n$ for $n \ge 2$, then $G$ is well-bicovered with bipartite 
number $2$. On the other hand, if $G$ is connected and not complete, then it contains 
an induced $P_3$, and so $b(G)\ge 3$.
\end{proof}

\begin{lemma} 
A connected graph $G$ is well-bicovered with bipartite number $3$ if and only if $G$ is 
obtained by taking a nontrivial complete graph $K_n$ and attaching a pendant edge to 
one vertex of $K_n$.
\end{lemma}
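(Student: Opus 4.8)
The plan is to prove the two directions separately. Sufficiency will be a direct verification, while necessity is a short structural argument that repeatedly exploits the fact that $b(G)=3$ forbids \emph{every} four-vertex bipartite graph as an induced subgraph, in particular $P_4$, $C_4$, $2K_2$, the claw $K_{1,3}$, and $P_3\cup K_1$.

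For sufficiency, let $G$ be $K_n$ (with $n\ge 2$) with a pendant edge $uv$ attached, where $u$ is the leaf and $v\in V(K_n)$. First I would note that $b(G)=3$: any four vertices contain at least three vertices of $K_n$, which span a triangle, while $\{u,v,w\}$ with $w\in V(K_n)\setminus\{v\}$ induces a $P_3$. Then I would show that every maximal bipartite induced subgraph $B$ has order exactly $3$. Such a $B$ must contain $u$, since otherwise $B\subseteq V(K_n)$ has at most two vertices and $B\cup\{u\}$ is again bipartite (it is a subgraph of a $P_3$ plus possibly an isolated vertex), so $B$ would not be maximal. Given $u\in B$, the set $B$ meets $V(K_n)$ in at most two vertices (else a triangle appears) and in at least two vertices (else $|B|\le 2$ and $B$ extends by a vertex of $K_n$). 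Hence $|B|=3$, so $G$ is well-bicovered with bipartite number $3$.

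For necessity, suppose $G$ is connected, well-bicovered, and $b(G)=3$. Since complete graphs have bipartite number $2$, $G$ is not complete, and taking a shortest path between a nonadjacent pair of vertices — a path of length at least $3$ would contain an induced $P_4$ — yields an induced $P_3$, say with vertices $a,b,c$ and center $b$. This $P_3$ has order $3=b(G)$, so it is maximal; hence for every other vertex $v$ the set $\{a,b,c,v\}$ is not bipartite, so it contains a triangle, and that triangle is forced to use $b$, giving $vb\in E(G)$. Thus $b$ is a universal vertex, and it is the unique one: a second universal vertex $w$ would make $\{b,w\}$ a maximal bipartite subgraph of order $2$, contradicting well-bicoveredness. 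Now put $H=G-b$. Then $\IND(H)=2$ (an independent triple in $H$ together with $b$ induces a claw, while $\{a,c\}$ shows $\IND(H)\ge 2$), and $H$ is $2K_2$-free. From $\IND(H)=2$, $H$ has at most two components; from $2K_2$-freeness, at most one component contains an edge; and $H$ is not connected, since a connected $H$ with $\IND(H)=2$ contains (via a shortest path, as above) an induced $P_3$ whose center would be a universal vertex of $G$ other than $b$. So $H=H_1\cup\{u\}$ with $u$ an isolated vertex.

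The crux is then to show that $H_1$ is complete. If $H_1$ contained an induced $P_3$ on $\{p,q,r\}$, then since $u$ is adjacent in $G$ only to $b$, the set $\{p,q,r,u\}$ would induce $P_3\cup K_1$, a four-vertex bipartite subgraph, contradicting $b(G)=3$. Hence $H_1=K_m$ for some $m\ge 1$, and $G$ is obtained from $K_{m+1}$ (on $\{b\}\cup V(H_1)$) by attaching the pendant edge $bu$; the case $m=1$ gives $G=P_3$, which is $K_2$ with a pendant edge. I expect the main obstacle to be the bookkeeping — isolating the degenerate cases for $H$ and checking that each appeal to ``no four-vertex bipartite induced subgraph'' invokes the correct forbidden graph — rather than any single hard step, since once $b(G)=3$ is in hand the universal-vertex and completeness arguments are each only a line or two.
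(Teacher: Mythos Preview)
Your proof is correct. Both your argument and the paper's begin by exhibiting a universal vertex $b$ (the center of any induced $P_3$, which must be maximal since $b(G)=3$) and then analyzing $H=G-b$; the divergence is only in how the structure of $H$ is pinned down. The paper observes directly that $H$ is $P_3$-free---since the center of any induced $P_3$ in $H$ would be a second universal vertex of $G$, making $\{b,w\}$ a maximal bipartite set of order~$2$---so $H$ is a disjoint union of cliques, and finishes with a short counting step: choosing two vertices from each nontrivial clique and one from each $K_1$ yields a maximal bipartite subgraph, and forcing its order to equal $3$ gives $H=K_1\cup K_n$. You instead constrain $H$ through a sequence of forbidden four-vertex bipartite subgraphs ($K_{1,3}$ for $\IND(H)=2$, $2K_2$, and finally $P_3\cup K_1$), first isolating the pendant vertex $u$ and only then proving the remaining component complete. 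Both routes are short and elementary; the paper's is a touch more direct because it avoids the detour through disconnectedness, while yours makes the role of the hypothesis $b(G)=3$ as a forbidden-induced-subgraph condition more transparent.
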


\begin{proof}
First, note that if $G$ is obtained by taking a nontrivial complete graph $K_n$ and 
attaching a pendant edge to one vertex of $K_n$, then $G$ is well-bipartite with 
bipartite number $3$. Conversely, suppose $G$ is well-bicovered with bipartite number 
$3$. If $G$ has order $3$, then $G = P_3$ and we are done. So we may assume that $G$ 
has order at least $4$.

Since $G$ is connected and not complete, there exists an induced $P_3$ with central 
vertex~$v$; this must be a maximal bipartite subgraph in $G$. Thus, every vertex of $G$ 
is adjacent to $v$. Suppose there exists an induced $P_3$ in $G-v$ with central 
vertex~$w$. As before, this implies that every vertex of $G$ is adjacent to $w$. However, 
$G[\{v, w\}]$ is then a maximal bipartite subgraph of $G$, which is a contradiction. It 
follows that $G-v$ is a disjoint union of cliques and we may write $G-v = K_{n_1} \cup 
\cdots \cup K_{n_j}$. We can create a maximal bipartite subgraph $H$ of $G$ by choosing 
one vertex from each $K_{n_i} = K_1$ and two vertices from each $K_{n_i}$ where $n_i 
\ge 2$. Since $|V(H)| = 3$, it follows that $G - v = K_1 \cup K_n$ where $n \ge 2$.
\end{proof}

Next, we consider the other end of the spectrum and classify well-bicovered graphs~$G$ 
with bipartite number $|V(G)| - 1$.

\begin{lemma}
A graph $G$ is well-bicovered with bipartite number $|V(G)|-1$ if and only if there 
exists an odd cycle $C$ in $G$ such that every odd cycle of $G$ contains every vertex 
on~$C$.
\end{lemma}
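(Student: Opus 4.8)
The plan is to recast everything in terms of \emph{odd-cycle transversals}. Call $T\subseteq V(G)$ a transversal if $T$ meets the vertex set of every odd cycle of $G$. The first step is the routine observation that for $S\subseteq V(G)$ the induced subgraph $G[S]$ is bipartite exactly when $V(G)\setminus S$ is a transversal, since an odd cycle of $G$ lies inside $G[S]$ iff it is disjoint from $V(G)\setminus S$. I would then upgrade this to: $G[S]$ is a \emph{maximal} bipartite subgraph iff no vertex of $V(G)\setminus S$ can be returned to $S$, i.e.\ iff $V(G)\setminus S$ is an \emph{inclusion-minimal} transversal. Writing $n=|V(G)|$ and noting that $b(G)=n$ iff $G$ is bipartite, this reduces the statement to: $G$ is well-bicovered with $b(G)=n-1$ $\iff$ $G$ is non-bipartite and every minimal transversal of $G$ has exactly one element.

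Next I would introduce $I=\{v\in V(G): v\text{ lies on every odd cycle of }G\}$ and note that the size-one transversals are precisely the singletons $\{v\}$ with $v\in I$ (and these are automatically minimal). The crux is then to prove: every minimal transversal is a singleton $\iff$ $G[I]$ contains an odd cycle. For the direction ``$\Leftarrow$'', if $C_0$ is an odd cycle inside $G[I]$ and $T$ is any minimal transversal, then $T$ must hit $C_0$ at some $v$, and $v\in V(C_0)\subseteq I$, so the transversal $\{v\}$ sits inside $T$ and minimality forces $T=\{v\}$. For ``$\Rightarrow$'', if $G[I]$ had no odd cycle then $V(G)\setminus I$ would be a transversal, hence would contain a minimal transversal, which by hypothesis is a singleton $\{u\}$ with $u\notin I$; but $\{u\}$ being a transversal means $u\in I$, a contradiction. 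Finally I would observe that any odd cycle $C_0\subseteq G[I]$ in fact satisfies $V(C_0)=I$ (every vertex of $I$ lies on $C_0$ by definition of $I$, while $V(C_0)\subseteq I$ since $C_0\subseteq G[I]$), so ``$G[I]$ contains an odd cycle'' is equivalent to ``$I$ is the vertex set of an odd cycle $C$'', which is literally the stated condition: an odd cycle $C$ with $V(C)=I$ is one such that every odd cycle contains all of $V(C)$, and conversely such a $C$ forces $V(C)\subseteq\bigcap_{C'\text{ odd}}V(C')=I\subseteq V(C)$.

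I expect the only real obstacle to be expository rather than mathematical: one must be careful that a ``vertex-maximal bipartite subgraph'' corresponds to a \emph{minimal} (not minimum-cardinality) transversal, and in particular that a minimal transversal of size $\ge 2$ genuinely produces a maximal bipartite subgraph of order $n-2$, which is exactly what destroys the well-bicovered property. Once the transversal dictionary is set up carefully, each of the remaining implications is a two-line argument, and no case analysis on the structure of $G$ is needed.
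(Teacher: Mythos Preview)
Your argument is correct. The dictionary ``maximal bipartite subgraphs $\leftrightarrow$ minimal odd-cycle transversals'' is sound, and once it is in place both directions of the crux (every minimal transversal is a singleton $\iff$ $G[I]$ contains an odd cycle) are indeed two-line arguments; the final identification of the stated condition with ``$G[I]$ has an odd cycle'' is also clean, including the pleasant bonus that any such cycle must have vertex set exactly~$I$.

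This is a genuinely different packaging from the paper's proof. The paper argues directly: for the forward direction it picks a shortest odd cycle $C$ and, for each $v\in V(C)$, extends $\{v\}$ to a maximal bipartite subgraph, which by hypothesis omits exactly one vertex and so witnesses that $v$ lies on every odd cycle; for the converse it notes that any maximal bipartite subgraph must miss some vertex of $C$, and since deleting any vertex of $C$ already yields a bipartite graph, maximality forces the subgraph to be all of $G$ minus that one vertex. Your transversal framework accomplishes the same thing but with a cleaner separation of concerns: the correspondence with minimal transversals is established once, after which the combinatorics reduces to a statement purely about the family of odd cycles and their common intersection~$I$. The paper's version is shorter and needs no new vocabulary; yours is more structural, makes the role of $I=\bigcap_{C'\text{ odd}}V(C')$ explicit, and would generalize more readily (e.g.\ to characterizing well-bicovered graphs with $b(G)=n-k$ in terms of minimal transversals of size~$k$).
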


\begin{proof}
Suppose first that $G$ is well-bicovered of bipartite number $|V(G)| - 1$. Thus, $G$ is 
not bipartite. Let $C$ be a shortest odd cycle 
in~$G$, and let $v$ be any vertex on $C$. Note that there exists a maximal bipartite 
subgraph~$H$ of $G$ that contains $v$. If $v$ is not on every odd cycle in $G$, then 
the cardinality of $H$ is at most $|V(G)| - 2$. Thus, $v$ must lie on every odd cycle 
in $G$.

Conversely, suppose that $G$ contains an odd cycle $C$ in $G$ such that every odd cycle 
of $G$ contains every vertex on $C$. (Necessarily $C$ is chordless.) Let $H$ be a 
maximal bipartite subgraph of $G$. We know that $H$ must contain some vertex $v$ on 
$C$. Since $G - v$ is bipartite, it follows that $|V(H)| = |V(G)| - 1$. 
\end{proof}


\section{Graph Operations}

In this section we consider how the property of being well-bicovered relates 
to several graph operations including disjoint union (and related ``gluing'' operations), 
join, lexicographic product, and cartesian product.

\subsection{Union}

The question for disjoint union of graphs is trivial. The disjoint union is 
well-bicovered if and only if each component is well-bicovered.
Indeed, we observed earlier that bridges are irrelevant, and so one can take the 
disjoint union and add a bridge. 

But consider instead taking two disjoint well-bicovered graphs $G$ and $H$ and 
identifying a vertex $g$ of $G$ with a vertex $h$ of $H$ to form vertex $v$. The result 
need not be well-bicovered: consider for example $G=H=K_3$. Indeed, the result is 
guaranteed to be not well-bicovered unless for at least one of the graphs, the 
identified vertex is in every maximal bipartite subgraph. 
This holds, for example, when one of $G$ or $H$ is bipartite.
This idea is generalized slightly in the following operation:

\begin{lemma}
Let $G$ be a well-bicovered graph with adjacent vertices $u$ and $v$ 
and let $H$ be a bipartite graph with adjacent vertices $u'$ and $v'$.
Let $F$ be the graph formed from their disjoint union by identifying $u$ with
$u'$ and $v$ with $v'$. Then $F$ is well-bicovered. 
\end{lemma}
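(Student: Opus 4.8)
The plan is to analyze a maximal bipartite subgraph $B$ of $F$ and show that $|V(B)|$ is determined by $b(G)$. The key structural observation is that the edge $uv$ (the identified edge) is a ``chokepoint'': since every vertex of $H$ lies on a path — indeed, since $H$ is bipartite and connected to $G$ only through $u$ and $v$ — any odd cycle of $F$ must either lie entirely in $G$ or must pass through both $u$ and $v$ and use the edge $uv$ together with a $u$–$v$ path through $H$; but since $H$ is bipartite, every $u$–$v$ path in $H$ has the same parity, so in fact $H\cup\{uv\}$ has a very restricted odd-cycle structure. I would first record the elementary fact that $F[V(H)]=H$ is bipartite, so that $B$ restricted to $H$ loses at most ... actually loses nothing forced by $H$ alone; the obstructions to bipartiteness all involve $G$.

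First I would set $B$ to be an arbitrary maximal bipartite subgraph of $F$, and let $B_G=B\cap V(G)$ and $B_H=B\cap V(H)$ (overlapping in the two shared vertices $u,v$). The goal is to show $|V(B)|=b(G)+|V(H)|-2$, which would prove well-bicoveredness since the right side is a constant. The main claim to establish is that $B_G$ is a maximal bipartite subgraph of $G$ (hence has order exactly $b(G)$), and simultaneously that $V(H)\subseteq V(B)$, i.e. $B$ contains every vertex of $H$. Given a proper $2$-coloring of $B$, the subgraph $B_H$ is bipartite and its coloring must be consistent with a fixed proper $2$-coloring of the connected bipartite graph $H$ (after possibly swapping colors in each component of $H-\{u,v\}$); the point is that whatever colors $u$ and $v$ receive in $B$, one can extend to all of $V(H)$ — here I would use that $u$ and $v$ are \emph{adjacent} in $H$, so they get different colors in any proper coloring of $B$, which is exactly the constraint $H$ imposes on its boundary. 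So adding all of $V(H)$ to $B$ keeps it bipartite, whence by maximality $V(H)\subseteq V(B)$.

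Next, for the other direction, I would argue $B_G$ is maximal bipartite in $G$: it is certainly bipartite (induced subgraph of the bipartite graph $B$). If some vertex $x\in V(G)\setminus B_G$ could be added to $B_G$ keeping it bipartite in $G$, I want to add $x$ to $B$ in $F$. The coloring of $B_G+x$ in $G$ restricts to a proper coloring of $B_G$; I need to reconcile it with the coloring of $B_H$ already in $B$. Since $B_G$ and $B_H$ meet only in $\{u,v\}$ (a single edge), and in both colorings $u,v$ get opposite colors, I can align them by a global color swap on the $G$-side. Then $x$ together with $V(H)$ extends $B$ to a larger bipartite subgraph — contradicting maximality of $B$ unless no such $x$ exists. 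Hence $|V(B_G)|=b(G)$ and $|V(B)|=|V(B_G)|+|V(H)|-|\{u,v\}|=b(G)+|V(H)|-2$, a constant.

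The main obstacle I anticipate is the careful bookkeeping of $2$-colorings across the gluing: I must ensure that a proper $2$-coloring witnessing bipartiteness of $B_G$ (or of $B_G+x$) can always be matched up with one witnessing bipartiteness of $B_H\cup V(H)$, and the linchpin is precisely that $u$ and $v$ are adjacent in \emph{both} $G$ and $H$, forcing them to receive distinct colors on each side so the colorings can be synchronized by at most a global swap. A secondary subtlety is handling the case where $B$ might a priori not contain both $u$ and $v$; but if $B$ omits, say, $v$, then since $H-v$ is still bipartite and attaches to $G$ only through $u$ (a cut vertex of $F-v$ situation, or simply: no odd cycle of $F-v$ meets $V(H)$), one can add all remaining vertices of $H$ to $B$ without creating an odd cycle, again contradicting maximality — so in fact $B$ contains all of $V(H)\supseteq\{u,v\}$ from the start, which streamlines the argument.
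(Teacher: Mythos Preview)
Your approach is essentially the paper's: both arguments show that any maximal bipartite subgraph $B$ of $F$ contains all of $V(H)\setminus\{u,v\}$ and that $B\cap V(G)$ is maximal bipartite in $G$, whence $|B|=b(G)+|V(H)|-2$. The paper does this more briskly via the single observation that every chordless odd cycle of $F$ lies entirely in $G$ (if it used a vertex of $H\setminus\{u,v\}$ it would have to pass through both $u$ and $v$, making $uv$ a chord), whereas you work by extending $2$-colorings across the shared edge. Both routes are fine.

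However, your ``secondary subtlety'' paragraph contains a genuine error. You assert that $B$ must contain both $u$ and $v$, i.e.\ $V(H)\subseteq B$. This is false: take $G=K_3$ on $\{u,v,w\}$ and $H$ the path $u'\,v'\,x$; then $\{u,w,x\}$ is a maximal bipartite subgraph of $F$ that omits $v$. Your argument there only shows that if $v\notin B$ then one can add the vertices of $H\setminus\{v\}$, which yields $V(H)\setminus\{v\}\subseteq B$ but gives no contradiction---you cannot add $v$ itself if it closes an odd cycle inside $G$. The fix is straightforward: claim only $V(H)\setminus\{u,v\}\subseteq B$. The count then still works, since writing $B_G=B\cap V(G)$ and $B_H=B\cap V(H)$ one has $B_G\cap B_H=B\cap\{u,v\}$ and $|B_H|=(|V(H)|-2)+|B\cap\{u,v\}|$, so $|B|=|B_G|+|V(H)|-2$ regardless of which of $u,v$ lie in $B$. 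Your maximality argument for $B_G$ likewise goes through (indeed more easily) when the overlap is a single vertex or empty.
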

\begin{proof}
Any chordless odd cycle of $F$ has all its vertices in $G$ (since
if it uses new vertices in $H$ then $uv$ is a chord). Thus 
any maximal bipartite subgraph of $G$ can be extended to 
one of $F$ by adding all vertices of $H-\{u',v'\}$. Conversely,
any maximal bipartite subgraph of $F$ contains all of $H-\{u',v'\}$, 
and removal thereof yields a maximal bipartite subgraph of $G$.
\end{proof}

A simple example of the above is the case that $H$ is an even cycle.
One can also glue on odd cycles and general well-bicovered
graphs under some circumstances.

\begin{lemma}
Let $G$ be a well-covered graph with clique $C$ 
and let $H$ be a well-bicovered graph with clique $D$, where $|C|=|D|=k$.
Then the graph $F$ formed from their disjoint union by adding $k$ disjoint paths 
between $C$ and $D$ such that all the added
paths have the same parity, is well-bicovered. 
\end{lemma}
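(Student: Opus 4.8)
The plan is to establish the following sharper statement, from which the lemma is immediate. (One preliminary remark: the hypothesis on $G$ should really be \emph{well-bicovered} rather than \emph{well-covered}. If $G$ is well-covered but not well-bicovered --- for instance $G=K_4-e$ with a pendant edge attached to a vertex of degree $2$ --- then, taking $H=K_3$, $C$ an edge of $G$, $D$ an edge of $H$, and joining the two vertices of $C$ to those of $D$ by two edges, the resulting graph $F$ has maximal bipartite subgraphs of orders $5$ and $6$.) So assume $G$ is well-bicovered, and let $W$ be the set of all vertices interior to the $k$ added paths. The claim is that the maximal bipartite subgraphs of $F$ are exactly the sets $B_G\cup B_H\cup W$ where $B_G$ is a maximal bipartite subgraph of $G$ and $B_H$ a maximal bipartite subgraph of $H$. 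Granting this, every such set has order $b(G)+b(H)+|W|$ because $G$ and $H$ are well-bicovered, so $F$ is well-bicovered with $b(F)=b(G)+b(H)+|W|$.

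Write $C=\{c_1,\dots,c_k\}$, $D=\{d_1,\dots,d_k\}$, let $P_i$ be the added path from $c_i$ to $d_i$, and let $p$ be the common parity of $|P_1|,\dots,|P_k|$. I would start from the structure of the odd cycles of $F$: each lies within $G$, lies within $H$, or is \emph{mixed}. A mixed cycle traverses an even number of the $P_i$ in full --- it crosses between the $G$-side and the $H$-side an even number of times, and each crossing uses an entire $P_i$ since interior vertices of $P_i$ have degree $2$ --- and, as all the $P_i$ have parity $p$, these paths contribute an even amount to its length; hence a mixed cycle is odd iff the combined length of its $G$- and $H$-segments is odd. I would also record two elementary facts used repeatedly: a bipartite induced subgraph meets any clique in at most two vertices, and in a bipartite graph every path between two adjacent vertices has odd length.

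Now fix a maximal bipartite subgraph $B$ of $F$, together with a proper $2$-colouring of $F[B]$. Step~1 is that $W\subseteq B$. Suppose not, and let $w$ be an interior vertex of some $P_i$, not in $B$, chosen at minimum distance from $c_i$ along $P_i$. By maximality $F[B\cup\{w\}]$ contains an odd cycle; it passes through $w$ and, since $\deg_F(w)=2$, uses both edges at $w$, hence traverses all of $P_i$. Apart from $P_i$ this cycle runs from $c_i$ to $d_i$, so it uses at least one further connecting path; but at most two vertices of the clique $C$ lie on it, so it uses at most one; hence exactly one, say $P_j$, plus a single $G$-segment from $c_i$ to $c_j$ and a single $H$-segment from $d_i$ to $d_j$. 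Since $c_ic_j\in E(G)$ and $d_id_j\in E(H)$, both segments have odd length, while $|P_i|+|P_j|$ is even, so the cycle is even --- a contradiction. (If some interior vertex of $P_i$ between $w$ and $d_i$ were also absent, then the two neighbours of $w$ would already lie in different components of $F[B]$, so $B$ would not be maximal; a short case check on which vertices of $P_i$ are present handles the rest.) Step~2 is that $B\cap V(G)$ is a maximal bipartite subgraph of $G$ and, symmetrically, $B\cap V(H)$ one of $H$; with Step~1 this forces $B=(B\cap V(G))\cup(B\cap V(H))\cup W$. Indeed $B\cap V(G)$ is bipartite, and were it extendable in $G$ by a vertex $v$, then any odd cycle of $F[B\cup\{v\}]$ would be mixed and pass through $v$; as in Step~1 it would use exactly two connecting paths, one $G$-segment between two vertices of $C$ --- odd, since $G[(B\cap V(G))\cup\{v\}]$ is bipartite --- and one $H$-segment between two vertices of $D$ --- odd --- hence would be even, contradicting maximality of $B$.

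For the converse, given maximal bipartite subgraphs $B_G$ of $G$ and $B_H$ of $H$, the set $B_G\cup B_H\cup W$ is bipartite in $F$ (a mixed odd cycle would again need odd~$+$~odd~$+$~even to be odd) and maximal (adding a vertex of $V(G)$ contradicts maximality of $B_G$, one of $V(H)$ that of $B_H$, and $W$ is already included), which completes the characterization. The crux --- and the step I expect to cause the most trouble --- is Step~2, resting on Step~1: this is precisely where the equal-parity condition and the clique-ness of $C$ and $D$ are used, through the ``odd~$+$~odd~$+$~even~$=$~even'' count that rules out every would-be mixed odd cycle through a re-inserted vertex. Step~1 is routine but does require the small case distinction indicated, and care is needed in Step~2 to allow for the re-inserted vertex itself lying in $C$ (resp. $D$).
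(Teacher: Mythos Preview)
Your approach matches the paper's: both establish that every maximal bipartite subgraph of $F$ decomposes as (maximal bipartite in $G$) $\cup$ (maximal bipartite in $H$) $\cup W$, the key being that any ``mixed'' odd cycle is in fact even. The paper does this in three lines by observing that every \emph{chordless} cycle of $F$ meeting both $G$ and $H$ consists of exactly two of the added paths together with the two clique edges joining their endpoints, and hence has even length; this immediately yields that $F[S]$ is bipartite iff both $G[S\cap V(G)]$ and $H[S\cap V(H)]$ are, from which the decomposition follows at once. Your argument unwinds the same parity computation inside an arbitrary maximal $B$, which is fine but longer.

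You are right that the hypothesis on $G$ should read ``well-bicovered'' rather than ``well-covered''; that is a typo in the paper, and your counterexample is valid.

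One genuine slip: in the parenthetical inside Step~1, the claim that the two neighbours of $w$ ``lie in different components of $F[B]$'' is false --- they may well be joined via $c_i$, another connecting path $P_j$, and $d_i$. What is true (and suffices) is that no \emph{cycle} through $w$ can exist in $F[B\cup\{w\}]$ once some interior vertex of $P_i$ is absent, since the degree-$2$ constraint forces any such cycle to run along the whole of $P_i$; hence $B\cup\{w\}$ remains bipartite, contradicting maximality. Also, your bound ``at most two vertices of the clique $C$ lie on it'' requires the odd cycle to be taken chordless in $F[B\cup\{w\}]$, which you should state explicitly.
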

\begin{proof}
Any chordless cycle of $F$ containing a vertex of 
both $G$ and $H$ necessarily consists of two 
of the added paths and the edges joining their end points, and thus has 
even length. It follows that every
chordless odd cycle is contained entirely within either $G$ or $H$.
Thus, every maximal bipartite subgraph of $F$ consists
of a maximal bipartite subgraph of $G$, a maximal bipartite subgraph
of $H$, and all the interior vertices of the added paths. 
\end{proof}

We consider for a moment girth.
Finbow et al.~\cite{FHN-1993} classified all connected well-covered graphs of girth at 
least $5$. But there does not appear an 
easy characterization of well-bicovered graphs of large girth.
For example, all cycles are well-bicovered
and both the above lemmas can be used to grow a well-bicovered graph
while preserving the girth. One can even grow the girth under some circumstances
using the following lemma:

\begin{lemma}
Let $G$ be a well-bicovered graph with edge $e$ incident with a vertex 
$y$ of degree~$2$. Then the graph $G'$ obtained from $G$ by 
replacing the edge $e$ by a path of length three is well-bicovered.
\end{lemma}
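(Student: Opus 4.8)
The plan is to track how chordless odd cycles and, consequently, maximal bipartite subgraphs of $G'$ relate to those of $G$. Write $e = xy$ where $\deg_G(y)=2$, and let $z$ be the other neighbor of $y$ in $G$. In $G'$ we replace $e$ by a path $x, a, b, y$ on two new internal vertices $a,b$, so that in $G'$ the vertex $y$ still has degree $2$ (now with neighbors $z$ and $b$), while $a$ and $b$ have degree $2$ as well. The first step is to understand the odd cycles: any chordless cycle of $G'$ that uses at least one of $a,b$ must use the whole subpath $x,a,b,y$ (since $a,b,y$ all have degree $2$), and such a cycle corresponds to a chordless cycle of $G$ through the edge $xy$ but with its length increased by $2$ — hence of the same parity. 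Cycles avoiding $\{a,b\}$ are exactly the chordless cycles of $G$ avoiding the edge $xy$, unchanged. So $G'$ is bipartite iff $G$ is (and if so there is nothing to prove), and more usefully the odd-cycle structure is ``the same'' with the edge $xy$ subdivided twice.

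Next I would set up the bijection on maximal bipartite subgraphs. Given a maximal bipartite subgraph $B$ of $G$, I claim $B' := B \cup \{a,b\}$ is a bipartite subgraph of $G'$ when $B$ does not already create an odd cycle through $x,a,b,y$; more carefully, I would argue that we can always add \emph{at least one} of $a,b$ and in fact both, except possibly we need to be a little careful when $B$ contains both $x$ and $y$ on the same side or opposite sides of its bipartition. The key point is that $a,b$ form a path pendant-ish segment: adding $a$ and $b$ to $B$ can only create an odd cycle if that cycle runs $x$–$a$–$b$–$y$ and closes up through $B$, i.e. if $x$ and $y$ lie in $B$ and every $x$–$y$ path in $B$ has even length (so that going around via $a,b$ — which adds length $3$ — yields an odd cycle). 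But in that situation $B$ together with the edge $xy$ would \emph{not} be bipartite, and since $xy \in E(G)$, maximality of $B$ forces... hmm — actually $B$ maximal in $G$ need not contain both endpoints of $xy$. I would split into cases according to how many of $x,y$ lie in $B$:

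If $y \notin B$: by maximality of $B$, adding $y$ to $B$ creates an odd cycle in $G$; since $\deg_G(y)=2$ with neighbors $x,z$, this forces $x,z \in B$ and every $x$–$z$ path in $B$ to have even length. Then in $G'$, the set $B \cup \{a,b,y\}$ has order $|B|+3$; I must check it is bipartite and maximal. Bipartiteness: the only new cycle would use $x,a,b,y,z$, of length (path in $B$ from $x$ to $z$, even) $+3$ = odd — so that is NOT bipartite. So instead I add only $\{a,b\}$ to get $B\cup\{a,b\}$ of order $|B|+2$, and check maximality in $G'$: can I still not add $y$? Adding $y$ to $B\cup\{a,b\}$ closes the odd cycle just described, good; can I add any other vertex? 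No, since $B$ was maximal in $G$ and $a,b$ have all their neighbors among $\{x,b\}$, $\{a,y\}$. If instead $y \in B$ and $x \notin B$: by maximality adding $x$ to $B$ creates an odd cycle through $x$, which in $G$ need not use $y$; I would add $\{a,b,y\}$—wait $y$ is already in—I add $\{a,b\}$ and argue $B\cup\{a,b\}$ (order $|B|+2$) is maximal: $x$ still cannot be added since the odd cycle witnessing that in $G$ survives; $a$ forced to have $b$ and nothing else. Finally if both $x,y \in B$: then $B$ restricted to $G$ is bipartite including the edge $xy$, so $x,y$ lie on opposite sides; then $B \cup \{a,b\}$ is bipartite in $G'$ (color $a$ like $y$, $b$ like $x$), of order $|B|+2$, and maximal. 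So in all cases a maximal bipartite subgraph $B$ of $G$ extends to one of order $|B|+2$ in $G'$, giving $b(G') = b(G)+2$.

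Conversely, every maximal bipartite subgraph $B'$ of $G'$ must contain at least two of $\{a,b,y\}$ — indeed I would show it contains both $a$ and $b$ (if $a \notin B'$ then $a$ can be added unless a cycle forces otherwise, but $a$'s only neighbors are $x,b$, so the only obstruction is $x,b \in B'$ on the same side with an odd... no, $x$–$b$ path; this needs unpacking but is finite) — and then $B' \setminus \{a,b\}$ (or $B' \setminus \{a,b,y\}$, depending) is a maximal bipartite subgraph of $G$, of order $|B'|-2$. Combining, all maximal bipartite subgraphs of $G'$ have order $b(G)+2$, so $G'$ is well-bicovered. The main obstacle I anticipate is the bookkeeping in the case analysis above — precisely pinning down, in each configuration of which of $x,y$ lie in $B'$ and which of $a,b,y$ lie in $B'$, that the correspondence $B' \leftrightarrow B$ is well-defined and preserves maximality in both directions; the degree-$2$ hypothesis on $y$ is exactly what keeps these cases from proliferating, since it means $y,a,b$ is essentially a ``rigid'' segment that any chordless cycle must traverse in full.
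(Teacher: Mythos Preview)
Your approach is the same as the paper's: show that every maximal bipartite $B$ in $G$ extends to a maximal bipartite $B\cup\{a,b\}$ in $G'$, and conversely that every maximal bipartite $B'$ in $G'$ drops by exactly two vertices (from among $a,b,y$) to a maximal bipartite subgraph of $G$. Your forward direction is correct; the parity bookkeeping slips in a couple of places (e.g.\ when $y\notin B$ the $x$--$z$ paths in $B$ have \emph{odd} length, not even, and the new cycle through $y$ in $G'$ has length $4+(\text{odd})$, not $3+(\text{even})$), but the conclusions you draw are the right ones.

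The one real gap is in the converse: your claim that $B'$ must contain both $a$ and $b$ is false. Take $G=K_3$ on $\{x,y,z\}$; then $G'=C_5$, and $B'=\{x,a,y,z\}$ is a maximal bipartite subgraph of $G'$ that omits $b$. What \emph{is} true (and is what the paper does) is that $B'$ contains at least two of $\{a,b,y\}$, and one should remove whichever two are present (removing $a,b$ if all three are there). Your parenthetical ``or $B'\setminus\{a,b,y\}$, depending'' is in fact the correct patch --- when only two of the triple lie in $B'$, deleting all three is the same as deleting those two, and gives $|B'|-2$ --- but you should drop the attempt to prove $a,b\in B'$ and instead verify directly that in each of the four cases ($\{a,b\}\subseteq B'$, or exactly one of $\{a,y\}$, $\{b,y\}$, $\{a,b\}$ present) the resulting set is maximal bipartite in $G$. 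The degree-$2$ hypothesis on $y$ is indeed what makes this finite and clean, exactly as you say.
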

\begin{proof}
Say the added path has interior vertices $u$ and $v$.
Every maximal bipartite 
subgraph $B$ of $G$ can be augmented with $u$ and $v$ to 
be a maximal bipartite
subgraph of~$G'$. Conversely, if $B'$ is a maximal bipartite
subgraph of~$G'$, then it must contains at least two of $u,v,y$;
form $B$ by removing $u,v$ if it contains both, or the two of the triple it
does contain. The resultant $B$ is a
 maximal bipartite subgraph of $G$.
\end{proof}

Of course, one can replace three by any odd number in the above
lemma, or equivalently, iterate use of the lemma.

\subsection{Join}

We consider the join next.

\begin{theorem}
The join of two nonempty graphs $G$ and $H$ is well-bicovered
if and only if each is both well-covered and well-bicovered, and 
$b(G) = b(H) = 2\IND(G) = 2\IND(H) $.
\end{theorem}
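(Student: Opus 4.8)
The plan is to analyze maximal bipartite subgraphs of the join $G + H$ by exploiting the fact that in a join, any bipartite induced subgraph can contain vertices from \emph{at most one} side in a nontrivial way: specifically, if $B$ is bipartite and induces a subgraph of $G+H$ with bipartition $(B_1,B_2)$, then taking $x,y \in B \cap V(G)$ on the same side would force $xy \notin E(G)$, and taking any $g \in B\cap V(G)$, $h \in B \cap V(H)$ forces $g,h$ on opposite sides; so $B \cap V(G)$ and $B \cap V(H)$ are each independent sets, and if both are nonempty then $|B\cap V(G)| \le 1$ or $|B \cap V(H)| \le 1$ — in fact one of the two sides contributes an independent set and the other contributes at most one vertex. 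Wait: more carefully, if $|B\cap V(G)| \ge 1$ and $|B \cap V(H)|\ge 1$, all of $B\cap V(G)$ must lie on one side and all of $B\cap V(H)$ on the other, hence both are independent sets in their respective graphs, and $B$ itself is a complete bipartite-like structure only constrained by those independence conditions. So the maximal bipartite subgraphs of $G+H$ come in three flavors: (i) those lying entirely in $G$, namely maximal bipartite subgraphs of $G$ (these are maximal in $G+H$ too, since no vertex of $H$ can be added without creating a triangle unless $B\cap V(G)$ is a single vertex); (ii) symmetrically those in $H$; (iii) those of the form $I \cup J$ where $I$ is a maximal independent set of $G$ and $J$ a maximal independent set of $H$.

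First I would establish the above trichotomy precisely, being careful about the degenerate overlap cases (when a maximal bipartite subgraph of $G$ happens to be a single independent vertex set, it can be extended into $H$, etc.). The cleanest formulation: every maximal bipartite subgraph $B$ of $G+H$ satisfies exactly one of: $B = $ a maximal bipartite subgraph of $G$ with $\ge 2$ vertices in one color class (so unextendable into $H$); or the symmetric statement; or $B = I \cup J$ with $I$ maximal independent in $G$ and $J$ maximal independent in $H$. The type-(iii) subgraphs always exist (take any maximal independent sets), and have order $\IND(G) + \IND(H)$. Hence $b(G+H) \ge \IND(G)+\IND(H)$, and $G+H$ is well-bicovered iff every maximal bipartite subgraph of each type has order exactly $\IND(G)+\IND(H)$.

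Next I would run the two directions. For the reverse direction, assume $G,H$ are well-covered and well-bicovered with $b(G)=b(H)=2\IND(G)=2\IND(H)$. Then every maximal independent set of $G$ has size $\IND(G)$ and every maximal bipartite subgraph of $G$ has size $b(G) = 2\IND(G) = \IND(G)+\IND(H)$ (using $\IND(G)=\IND(H)$), and similarly for $H$; every type-(iii) subgraph has size $\IND(G)+\IND(H)$ as well. So all maximal bipartite subgraphs of $G+H$ have the common order $\IND(G)+\IND(H)$, i.e.\ $G+H$ is well-bicovered. For the forward direction, assume $G+H$ is well-bicovered with common bipartite number $b$. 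Type-(iii) subgraphs force $b = \IND(G)+\IND(H)$ and moreover force \emph{every} maximal independent set of $G$ to have the same size (pairing it with a fixed maximal independent set of $H$), so $G$ is well-covered, likewise $H$. Type-(i) subgraphs with $\ge 2$ vertices on a side then have order $b = \IND(G)+\IND(H)$; I need to also handle maximal bipartite subgraphs of $G$ that are independent sets — but those equal $\IND(G) \le 2\IND(G) \le b$ only if... here is where I must argue that $G$ is well-bicovered, i.e.\ \emph{every} maximal bipartite subgraph of $G$ has size $b(G)$. The subtlety: a maximal bipartite subgraph of $G$ need not be a maximal bipartite subgraph of $G+H$ if it is an independent set (it can be extended into $H$). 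So I would argue: take a maximal bipartite subgraph $B$ of $G$; if some color class has $\ge 2$ vertices, it is maximal in $G+H$, so $|B| = b$; if $B$ is an independent set, then it is a maximal independent set of $G$ (maximality), so since we can extend arbitrarily, take a specific larger bipartite subgraph — actually better: since $G$ is not bipartite-trivial, pick any maximal bipartite subgraph $B^\*$ of $G$ with two vertices in a class if one exists; if \emph{no} maximal bipartite subgraph of $G$ has $\ge 2$ in a class, then $G$ is a disjoint union of cliques and one checks directly. Combining, $b(G) = b = \IND(G)+\IND(H) = 2\IND(G)$, and symmetrically $b(H) = 2\IND(H)$, giving all four equalities.

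The main obstacle I anticipate is the bookkeeping around the degenerate cases in the forward direction — precisely, handling maximal bipartite subgraphs of $G$ that are themselves independent sets and hence fail to be maximal in the join, and ruling out the pathological possibility that \emph{all} maximal bipartite subgraphs of one factor are independent sets (equivalently, that factor is a disjoint union of cliques — a $P_3$-free graph). In that sub-case one must show directly that well-bicoveredness of the join forces each clique to have the same size and there to be enough of them, recovering $b = 2\IND$. Once the trichotomy of maximal bipartite subgraphs of the join is nailed down cleanly, the rest is a matter of matching up the numerics $\IND(G), \IND(H), b(G), b(H)$ and the two structural properties (well-covered, well-bicovered) against the single condition that all three types of maximal bipartite subgraph of $G+H$ share one order.
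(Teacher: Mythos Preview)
Your approach is the same as the paper's: classify maximal bipartite subgraphs of $G+H$ into those lying in $G$, those lying in $H$, and those of the form $I\cup J$ with $I,J$ maximal independent in $G,H$ respectively, then compare sizes. The paper does exactly this and reaches the numerics $b(G)=b(H)=\IND(G)+\IND(H)$, then uses $b\le 2\IND$ on each side to force $\IND(G)=\IND(H)$.

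The ``main obstacle'' you anticipate is not an obstacle at all. You worry about maximal bipartite subgraphs $B$ of $G$ that are independent sets (and hence would extend into $H$), and you propose handling separately the ``pathological'' case where every maximal bipartite subgraph of $G$ is independent. But this cannot happen when $G$ is nonempty: if $uv\in E(G)$ and $B$ is an edgeless maximal bipartite subgraph of $G$, then $B$ is independent, so $B\cup\{u\}$ induces a star and is still bipartite, forcing $u\in B$; likewise $v\in B$; but then $uv$ is an edge of $B$. Hence every maximal bipartite subgraph of a nonempty $G$ contains an edge, and adding any vertex of $H$ to it completes a triangle---so it is already maximal in $G+H$. (Incidentally, your stated criterion ``some color class has $\ge 2$ vertices'' is not the right one---it misses $B\cong K_2$---the correct and always-satisfied criterion is simply that $B$ contains an edge.) With this one-line observation your trichotomy is exact and the degenerate bookkeeping evaporates; the paper's proof is precisely your argument with this simplification built in.
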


\begin{proof}
Let $B$ be any maximal bipartite subgraph of the join. If $B$ contains vertices
from both $G$ and $H$, then it must consist of an independent set
from each graph. Indeed, its vertex set must be the 
union of a maximal independent set from each graph. Since this cardinality
is constant, we need both $G$ and $H$ to be well-covered.

On the other hand, if $B$ contains vertices only from $G$, then for
its cardinality to be constant, it must be that $G$ is well-bicovered.
(And note that since $G$ has at least one edge, there do exist such~$B$.)
We get a similar result if $B$ contains only vertices from $H$. Thus
it is necessary that $b(G) = b(H) = \IND(G) + \IND(H)$. But since
$b(G)\le 2\IND(G)$, this forces $\IND(G)=\IND(H)$, and thus the above
conditions are necessary.

Finally, it easy to see that the conditions
imply that all bipartite subgraphs of the join have the same cardinality.
\end{proof}

For the case that one of the graphs is empty, one gets a similar result
with a similar proof:

\begin{theorem}
The join of $rK_1$ and $H$ is well-bicovered
if and only if $H$ is both well-covered and well-bicovered, and 
$b(H) = r + \IND(H) $.
\end{theorem}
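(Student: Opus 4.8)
The plan is to mirror the proof of the previous theorem, with the empty graph $rK_1$ playing the role of a graph that is ``trivially'' well-covered and well-bicovered but whose independence and bipartite parameters both collapse to $r$. Write $F$ for the join of $rK_1$ and $H$, and let $R$ denote the vertex set of the $rK_1$ part. The starting observation is the same case split on a maximal bipartite subgraph $B$ of $F$: either $B$ meets both parts or it lies entirely inside one of them. Note that since $R$ is independent and every vertex of $R$ is adjacent to every vertex of $H$, a maximal bipartite subgraph that meets both parts cannot contain any edge of $H$ (such an edge together with any vertex of $R$ forms a triangle), so its intersection with $V(H)$ is an independent set of $H$, and in fact its intersection with $R$ is all of $R$; maximality then forces the $H$-part to be a \emph{maximal} independent set of $H$. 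Conversely $R$ together with any maximal independent set of $H$ is a maximal bipartite subgraph of $F$. Hence all ``mixed'' maximal bipartite subgraphs have common order exactly $r + \alpha(H)$ precisely when $H$ is well-covered.

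Next I would handle the subgraphs contained in a single part. A maximal bipartite subgraph contained in $R$ would have to be all of $R$, but $R$ is not maximal as long as $H$ is nonempty (one can always add a vertex of $H$), so there are no such subgraphs once $H\neq\varnothing$ — which we may assume, since otherwise $F=rK_1$ is bipartite and the claimed equivalence reduces to the vacuous statement that $b(H)=r$, consistent with $H$ empty. A maximal bipartite subgraph $B$ contained in $V(H)$ must be a maximal bipartite subgraph of $H$ itself: it is bipartite, and if some vertex $w\in V(H)\setminus V(B)$ could be added inside $H$ then it could be added in $F$, while no vertex of $R$ can be added to such a $B$ unless $B$ is edgeless (a vertex of $R$ sees every vertex of $B$, creating a triangle whenever $B$ has an edge) — and if $B$ is edgeless then $B$ is an independent set of $H$ that is maximal in $F$, which is impossible since $R$ can be added. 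So the in-$H$ maximal bipartite subgraphs are exactly the maximal bipartite subgraphs of $H$, and they exist because $H$ is nonempty but we must be a little careful: if $H$ has an edge such subgraphs genuinely occur, whereas if $H$ is edgeless it is again bipartite. Either way, for these to all have common order we need $H$ well-bicovered with $b(H)$ equal to the common value $r+\alpha(H)$ coming from the mixed subgraphs.

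Putting the pieces together gives the necessity direction: $F$ well-bicovered forces $H$ well-covered, $H$ well-bicovered, and $b(H)=r+\alpha(H)$. For sufficiency, assume those three conditions. Every maximal bipartite subgraph of $F$ is of exactly one of the two types analyzed above; the mixed ones all have order $r+\alpha(H)$ by well-coveredness of $H$, and the in-$H$ ones all have order $b(H)$ by well-bicoveredness of $H$, and $b(H)=r+\alpha(H)$ by hypothesis, so all maximal bipartite subgraphs of $F$ have the same order and $F$ is well-bicovered.

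The only genuinely delicate point — the ``main obstacle'' — is the edgeless-$H$ and single-part bookkeeping: making sure that a maximal bipartite subgraph of $F$ really does fall into exactly one of the two clean cases, that $R$ alone is never maximal, and that an edgeless bipartite subgraph of $H$ is never maximal in $F$. These are all immediate from the completeness of the join between $R$ and $V(H)$, but they are exactly where the proof of the preceding theorem used ``$G$ has at least one edge,'' and here the analogue is ``$H$ is nonempty,'' so I would state that assumption explicitly at the outset (noting the degenerate case $H$ empty separately, where $F$ is bipartite). No step requires a real computation beyond these adjacency checks, so the write-up can stay brief, essentially: ``with a similar proof'' fleshed out along the lines above.
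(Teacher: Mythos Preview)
Your proposal is correct and follows exactly the approach the paper intends: the paper gives no separate proof but simply says ``with a similar proof,'' meaning one mimics the case split of the preceding theorem (mixed subgraphs versus subgraphs lying entirely in one factor), which is precisely what you do. The only small wrinkle is your ``Either way'' clause in the edgeless-$H$ discussion: if $H$ has no edge then there are \emph{no} maximal bipartite subgraphs of $F$ contained in $V(H)$, so the condition $b(H)=r+\alpha(H)$ does not arise from that case (and indeed the theorem fails for edgeless $H$ with $r\ge 1$); but the paper's context is explicitly that $H$ is the nonempty factor, so this is outside the intended scope and not a genuine gap.
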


\subsection{Well-bicovered graphs with large cliques}

We next ask what operations can be applied to a complete graph to create other 
well-bicovered graphs. 

\begin{lemma} \label{l:triangleOnEdge}
Let $G$ be the graph obtained by taking a nontrivial complete graph $K_n$ and, for each 
edge $e \in E(K_n)$, adding a vertex $v_e$ that is adjacent to both vertices incident 
to~$e$. Then $G$ is well-bicovered.
\end{lemma}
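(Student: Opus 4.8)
The plan is to compute, for every maximal bipartite subgraph $B$ of $G$, its order directly and show it is always $\binom{n}{2}+1$. Write $K$ for the vertex set of the original $K_n$ and let $W=\{v_e : e\in E(K_n)\}$ be the set of added vertices, so $|W|=\binom{n}{2}$, and note that $W$ is an independent set (each $v_e$ has degree $2$ in $G$, with both neighbours in $K$). The two structural facts I would record first are: (i) any bipartite subgraph of $G$ contains at most two vertices of $K$, since three vertices of $K$ induce a triangle; and (ii) given a bipartite subgraph $B$ and a vertex $v_e\in W$ with $e=\{a,b\}$, if $B$ does not contain both $a$ and $b$, then $v_e$ has at most one neighbour in $B$, hence lies on no cycle of $B+v_e$, so $B+v_e$ is again bipartite, whereas if $B$ does contain both $a$ and $b$ then $B+v_e$ contains the triangle $abv_e$.

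Now let $B$ be a maximal bipartite subgraph and split on $|B\cap K|\in\{0,1,2\}$. First I would rule out $B\cap K=\emptyset$: if $B$ consists only of vertices of $W$, then adding any $a\in K$ creates no cycle at all (a cycle through $a$ would have to return to a vertex of $K$, but $W$ is independent and no other vertex of $K$ is present), contradicting maximality. So $|B\cap K|\in\{1,2\}$. By fact (ii), maximality forces every vertex of $W$ into $B$, with the single exception that when $B\cap K=\{a,b\}$ the vertex $v_{ab}$ cannot be in $B$. Hence if $|B\cap K|=1$ then $|B|=1+|W|=\binom{n}{2}+1$, and if $|B\cap K|=2$ then $|B|=2+(|W|-1)=\binom{n}{2}+1$. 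In either case $|B|=\binom{n}{2}+1$, so $G$ is well-bicovered, with $b(G)=\binom{n}{2}+1$.

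I do not expect a serious obstacle here; the only points needing care are the observation in (ii) that a vertex with at most one neighbour in a bipartite subgraph can always be appended (so that maximality really does pull all of $W$ into $B$), and a sanity check that the two surviving configurations are genuinely bipartite — which they are, being in the case $|B\cap K|=1$ a star centred at the single vertex of $K\cap B$ together with isolated vertices, and in the case $|B\cap K|=2$ the edge $ab$ with some pendant vertices together with isolated vertices, i.e.\ a forest in both cases.
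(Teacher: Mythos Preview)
Your proof is correct and follows essentially the same case split as the paper: bound $|B\cap K|\le 2$, rule out $|B\cap K|=0$ by adding a clique vertex, and then argue that maximality forces all of $W$ (minus $v_{ab}$ in the two-vertex case) into $B$. Your justification via ``a vertex with at most one neighbour in $B$ can always be appended'' is slightly more explicit than the paper's, and your computed value $\binom{n}{2}+1$ is the right one (the paper's ``$n+1$'' is a slip; they coincide only at $n=3$).
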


\begin{proof}
Let $B$ be a maximal bipartite subgraph of $G$. Note that $|V(B) \cap V(K_n)| \le 2$. 
If $B$ contains no vertices of the clique $K_n$, then $V(B) = \{v_e: e \in E(K_n)\}$. 
However, this is not maximal, as the graph induced by $\{v_e: e \in E(K_n)\} \cup 
\{v\}$ for any $v \in V(K_n)$ is also bipartite in $G$. If $B$ contains only one vertex 
from $K_n$, say $w$, then $V(B) = \{w\} \cup \{v_e: e \in E(K_n)\}$. If $B$ contains 
two vertices from $K_n$, say $u$ and $w$, then $V(B) = \{u, w\} \cup \{v_e: e \in 
E(K_n) - \{uw\}\}$. In every case, $|V(B)| = n+1$.
\end{proof}

As an example of Lemma~\ref{l:triangleOnEdge}, 
if we start with $K_3$ then we get the Haj\'{o}s graph or 3-sun, shown here.

\begin{figure}[h!]
\begin{center}
\includegraphics{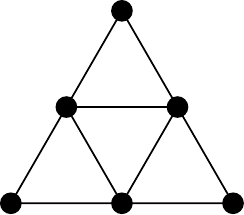}
\end{center}
\caption{A well-bicovered graph}
\end{figure}

\begin{lemma}
Let $G$ be the graph obtained by taking a disjoint union of nontrivial complete graphs $K_{n_1} \cup \cdots \cup K_{n_{\ell}}$ 
and adding edges between the cliques so that the added edges form a matching. Then $G$ is well-bicovered. 
\end{lemma}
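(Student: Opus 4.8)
The plan is to analyze a maximal bipartite subgraph $B$ of $G$ by looking at how many vertices it takes from each clique $K_{n_i}$. The key structural observation is that since each $K_{n_i}$ is complete, $B$ can contain at most two vertices from any single clique; and the matching edges between cliques are the only other edges present. So I would first set up the following: let $X$ be the set of cliques from which $B$ takes two vertices, and $Y$ the set from which $B$ takes at most one. The target is to show $|V(B)|$ is independent of the choice of $B$, and the natural candidate value is $n_1 + \cdots + n_\ell - (\ell - 1)$ — equivalently, $B$ misses exactly one vertex from all but one clique. Actually, one should first nail down the right formula by examining small cases (e.g., two triangles joined by a matching edge), since whether a clique contributes one or two vertices will depend on how the matching interacts.

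The heart of the argument is a maximality analysis. Suppose $B$ takes at most one vertex from some clique $K_{n_i}$ with $n_i \ge 2$, and suppose moreover it takes fewer than "the maximum allowed" — I would argue that there is then an uncovered vertex of $K_{n_i}$ that can be added to $B$ without creating an odd cycle, contradicting maximality. The point is: adding a single vertex $w$ of $K_{n_i}$ to $B$ can only create an odd cycle through $w$, and any cycle through $w$ must use at least one matching edge out of $K_{n_i}$ on each side; since the matching is so sparse (at most one matching edge per vertex), such cycles are highly constrained. I expect to show that in fact $B$ must take exactly two vertices from every nontrivial clique except possibly those that are "blocked" by already-chosen neighbors along the matching, and that the blocking structure forces a uniform count. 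The cleanest framing is probably: the subgraph $G$ is "almost bipartite" clique-by-clique, and the maximal bipartite subgraphs correspond bijectively to maximal bipartite subgraphs of a small auxiliary graph (contract each clique, or consider the matching as a graph on the cliques), where the count is manifestly constant.

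The main obstacle, I expect, is handling the interaction between the two-vertex choices and the matching edges: if $B$ takes two vertices $u, u'$ from $K_{n_i}$, then the edge $uu'$ together with matching edges out of $u$ and $u'$ and a path through other cliques could form an odd cycle, so not every pair of vertices from $K_{n_i}$ is always available, and the availability depends on choices elsewhere. I would manage this by arguing that the matching edges form a graph of maximum degree one on the vertex set, so the "conflict graph" among clique-vertex choices is itself a disjoint union of paths and single edges — a structure simple enough that a direct parity/counting argument (or an appeal to the union lemmas proved earlier in this section, viewing the matching edges as glued-on bridges or short paths) shows every maximal bipartite subgraph has the same order. In particular, since a matching edge between two cliques is a bridge unless it lies on a cycle, and cycles here must use at least two matching edges, I suspect many of the matching edges are in fact bridges and can be stripped via the "bridges are irrelevant" remark, reducing $G$ to a disjoint union of cliques plus a bounded residual structure that is easily checked directly.
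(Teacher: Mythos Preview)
Your proposal misses the key structural observation that makes the paper's proof short, and as a result wanders into a plan that is both more complicated than necessary and not clearly convergent.

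First, a small but telling error: your candidate value $n_1+\cdots+n_\ell-(\ell-1)$ is wrong. For two triangles joined by one matching edge this gives $5$, but a bipartite subgraph can use at most two vertices from each triangle, so $b(G)=4$. The correct value is simply $2\ell$: every maximal bipartite subgraph takes exactly two vertices from each clique.

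The real gap is in your ``main obstacle'' paragraph. You worry that choosing two vertices $u,u'$ from $K_{n_i}$ might conflict with matching edges elsewhere and create an odd cycle, so that ``not every pair of vertices from $K_{n_i}$ is always available.'' In fact every pair is always available, and this is the whole point. If $S$ is \emph{any} set consisting of exactly two vertices from each clique, then the edges of $G[S]$ split into the intra-clique edges (one per clique, hence a matching on $S$) and the edges of $M$ restricted to $S$ (also a matching). Thus $G[S]$ is a union of two matchings; every vertex has degree at most $2$, and along any cycle consecutive edges must come from different matchings, so all cycles are even and $G[S]$ is bipartite. Once you have this, the result is immediate: any maximal bipartite $B$ has at most two vertices per clique, and if it had fewer than two somewhere you could enlarge it to such an $S\supsetneq B$, contradicting maximality.

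Because you do not see this, your fallback ideas (contracting cliques to an auxiliary graph, stripping bridges) are aimed at a difficulty that does not exist. The bridge-stripping idea in particular would not reduce the problem in general: matching edges need not be bridges (e.g.\ three triangles with matching edges arranged in a cycle), so you cannot peel the graph down to a disjoint union of cliques that way.
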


\begin{proof}
Let $M$ represent the matching added and let $S$ be a subset of $V(G)$ formed by taking 
two vertices from $K_{n_i}$ for each $1 \le i \le \ell$. Then in $G[S]$ the edges not 
in $M$ form a matching. This means that every cycle in $G[S]$ alternates between $M$ 
and non-$M$ edges and so has even length. It follows that $G[S]$ is bipartite. On the 
other hand, no bipartite 
subgraph can have three vertices from any of the cliques. It follows that every maximal 
bipartite subgraph has exactly two vertices from each clique. The result follows.
\end{proof}

\subsection{Lexicographic product}

Recall that the lexicographic product (or composition) of graphs $G$ and $H$, denoted 
$G\circ H$, is the graph with $V(G\circ H)= V(G) \times V(H)$ whereby 
$(u, v)$ and $(x, y)$ are adjacent if $ux \in E(G)$,  or $u=x$ and $vy \in E(H)$. 

Topp and Volkmann \cite{TV-1992} proved that the lexicographic product 
$G\circ H$ of two nonempty graphs $G$ and $H$ is well-covered 
if and only if $G$ and $H$ are well-covered graphs. We now 
determine when the lexicographic product $G\circ H$ is well-bicovered. 
If $H$ is empty, the result is immediate:

\begin{lemma}
$G\circ mK_1$  is well-bicovered if and only if $G$ is well-bicovered.
\end{lemma}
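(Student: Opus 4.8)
The plan is to show that maximal bipartite subgraphs of $G\circ mK_1$ correspond exactly to maximal bipartite subgraphs of $G$, each vertex being ``blown up'' into its full fiber of $m$ copies. The key structural observation is that in $G\circ mK_1$, the copies $\{u\}\times V(mK_1)$ of a single vertex $u$ form an independent set (since $mK_1$ has no edges), while for adjacent $u,x\in V(G)$ the fibers are completely joined. Consequently, for any $W\subseteq V(G)$, the induced subgraph $(G\circ mK_1)[W\times V(mK_1)]$ is bipartite if and only if $G[W]$ is bipartite: an odd cycle in $G[W]$ lifts to one in the product, and conversely, contracting each fiber of an odd cycle in the product (two consecutive vertices cannot lie in the same fiber, as fibers are independent) yields a closed odd walk in $G[W]$, hence an odd cycle.

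First I would prove that if $B$ is a maximal bipartite subgraph of $G\circ mK_1$, then $V(B)$ is a union of full fibers. Suppose some fiber $\{u\}\times V(mK_1)$ is only partially included; then adding another copy $(u,v')$ of $u$ creates no new odd cycle, because any cycle through $(u,v')$ could be rerouted through an already-present copy $(u,v'')$ in the same fiber (the two copies have identical neighborhoods), contradicting maximality. Hence $V(B)=W\times V(mK_1)$ for some $W\subseteq V(G)$, and by the structural observation $G[W]$ is bipartite; maximality of $B$ then forces $W$ to be a maximal bipartite vertex set of $G$. Conversely, given a maximal bipartite subgraph $G[W]$ of $G$, the set $W\times V(mK_1)$ induces a bipartite subgraph which is maximal (any vertex outside lies in a fiber over some $x\notin W$, and adding it together with — or rather, since $W$ was maximal in $G$, adding $x$ to $W$ creates an odd cycle in $G$, which lifts).

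Finally I would conclude by counting: every maximal bipartite subgraph of $G\circ mK_1$ has order $m\cdot|W|$ where $G[W]$ ranges over maximal bipartite subgraphs of $G$, so all such subgraphs have the same order if and only if all maximal bipartite subgraphs of $G$ do, i.e.\ $b(G\circ mK_1)=m\cdot b(G)$ and $G\circ mK_1$ is well-bicovered iff $G$ is.

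The main obstacle is the careful verification that the correspondence between maximal bipartite subgraphs is a genuine bijection at the level of \emph{maximal} (not merely bipartite) subgraphs — specifically, the ``rerouting'' argument showing a maximal bipartite subgraph must swallow whole fibers. One must be slightly careful when $|W|\le 1$ or when $m=1$ (where the statement is trivial), but these are degenerate cases easily dispatched. Everything else is routine, given the lifting/contracting equivalence for odd cycles.
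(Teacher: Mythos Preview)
Your proposal is correct; the paper states this lemma as ``immediate'' and gives no proof at all, and your argument supplies precisely the natural justification one would expect---the twin/fiber-swallowing observation forcing maximal bipartite subgraphs to be unions of full fibers, and the resulting bijection with maximal bipartite subgraphs of $G$. There is nothing further to compare.
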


Now we consider the case when both $G$ and $H$ are nonempty. 
In the following, given a vertex $x \in V(G)$, we refer to the subgraph of $G\circ H$ 
induced by $\{(x, v):v \in V(H)\}$ as the  $H^x$-fiber. 

\begin{theorem}
Let $G$ and $H$ be nonempty graphs. 
Then $G\circ H$ is well-bicovered if and only if\\
(i) $G$ is well-covered; and\\
(ii) $H$ is both well-covered and well-bicovered, and also $b(H)=2 \IND(H)$.
\end{theorem}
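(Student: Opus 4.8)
The plan is to characterize the maximal bipartite subgraphs of $G \circ H$ by analyzing how many vertices they use from each $H^x$-fiber. The key structural observation is that if $B$ is a bipartite subgraph of $G\circ H$ and $B$ meets three or more fibers nontrivially, then the vertices from those fibers must form an independent set in $G$ (since $G$ has no induced $P_3$ restricted to those coordinates would force a triangle once two vertices come from one fiber); more precisely, within a single fiber $H^x$ that meets $B$, the induced subgraph $B \cap H^x$ must be bipartite, and if $x$ has a neighbor $y$ in $G$ with $B\cap H^y \neq \emptyset$, then $B\cap H^x$ must be an independent set in $H$ (otherwise an edge in $H^x$ together with a vertex in $H^y$ forms a triangle). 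So I would partition the support $X = \{x : B\cap H^x \neq\emptyset\}$ into the "clique-like" part and show $X$ is essentially independent in $G$ once any fiber contributes an edge — with the degenerate exception that $B$ lives inside at most two fibers.

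First I would prove sufficiency. Assume (i) and (ii). Let $B$ be a maximal bipartite subgraph. If the support $X$ spans an independent set in $G$, then $B$ restricted to each $H^x$ is bipartite and maximality forces it to be a maximal bipartite subgraph of $H^x$; by well-bicoveredness of $H$ each contributes $b(H)$ vertices, and by well-coveredness of $G$ and maximality $X$ is a maximal independent set of $G$, so $|V(B)| = \alpha(G)\cdot b(H)$. If instead $X$ is not independent, I would argue that then every fiber in $X$ contributes only an independent set of $H$, and the union of these independent sets is an independent set of $G\circ H$; but since $b(H) = 2\alpha(H)$, one can replace this configuration — taking instead two adjacent fibers $H^x, H^y$ (with $xy\in E(G)$, which exists) each contributing a maximal bipartite subgraph of $H$ of size $b(H)=2\alpha(H)$ — and the arithmetic shows $|V(B)|$ again equals $\alpha(G)\cdot b(H)$, using $\alpha(G\circ H) = \alpha(G)\alpha(H)$ (Topp–Volkmann / well-known). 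The point is that the two "regimes" give the same count precisely because $b(H) = 2\alpha(H)$ and because $G$ well-covered makes every maximal independent set of $G$ have size $\alpha(G)$.

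For necessity, I would assume $G\circ H$ is well-bicovered and extract each condition by choosing convenient $B$. Taking a single fiber $H^x$ with a maximal bipartite subgraph and extending by a maximal independent set of $G-N[x]$ in the other fibers gives $|V(B)|$ depending on both the size of the chosen maximal bipartite subgraph of $H$ and the chosen maximal independent set of $G - N[x]$; varying these forces $H$ well-bicovered and $G$ well-covered. Comparing the "one edge-bearing fiber" configuration against the "all-independent-fibers" configuration (an independent set of $G\circ H$, whose maximal ones have size $\alpha(G)\alpha(H)$ if $G\circ H$ is well-covered — but here we only know well-bicovered, so I must be careful) forces $b(H) = 2\alpha(H)$: if $b(H) < 2\alpha(H)$, then a maximal bipartite subgraph using two adjacent fibers fully (size $2\,b(H)$ plus contributions from compatible further fibers) would have a different order than one using many independent fibers. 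I would also need to derive that $H$ is well-covered, which follows by embedding a maximal independent set of $H$ inside a fiber adjacent to another populated fiber and comparing orders.

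The main obstacle will be the necessity direction, specifically disentangling the two sources of variation (the combinatorics of $G$ versus the combinatorics of $H$) cleanly enough to conclude all of (i) and (ii) without circular reasoning — in particular establishing $b(H) = 2\alpha(H)$ and $H$ well-covered simultaneously, since these interact: a maximal bipartite subgraph of $G\circ H$ can trade "one fiber with a big bipartite piece" for "two adjacent fibers each with an independent piece," and pinning down when these must have equal order requires carefully chosen small instances (e.g. taking $G$ to contain an edge and comparing). I expect the sufficiency direction to be comparatively routine once the fiber-support dichotomy is established, and the equality $b(H) = 2\alpha(H)$ is what makes the two regimes reconcile.
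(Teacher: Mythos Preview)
Your fiber-by-fiber analysis is the right starting point, but the central dichotomy you set up is too coarse and the sufficiency argument would not close as written.

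You propose two regimes for a maximal bipartite $B$: either the support $X=\{x:B\cap H^x\neq\emptyset\}$ is independent in $G$ (each fiber contributes a maximal bipartite piece), or it is not independent and then ``every fiber in $X$ contributes only an independent set of $H$.'' The second claim is false. If $X$ is not independent, only the \emph{non-isolated} vertices of $G[X]$ are forced to contribute independent sets; an isolated vertex $x$ of $G[X]$ has no neighbour in $X$, so $B\cap H^x$ may well carry edges. Thus the generic maximal $B$ is a mixture: some fibers carry maximal bipartite subgraphs of $H$, others carry maximal independent sets of $H$, governed by the partition of the support into isolates and non-isolates. The paper formalises exactly this via ``good pairs'' $(X,Y)$ with $X$ independent, no $X$--$Y$ edges, and $G[Y]$ bipartite with no isolates, and builds the corresponding maximal bipartite sets $B_P$ fiber-wise.

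Even after you fix the partition, your arithmetic for sufficiency is missing its key ingredient. With the correct split one gets
\[
|V(B)| \;=\; |X|\,b(H) + |Y|\,\IND(H) \;=\; (2|X|+|Y|)\,\IND(H)
\]
using $b(H)=2\IND(H)$, and to conclude that this is constant you must know that $2|X|+|Y|$ is constant over all such maximal configurations in $G$. This is not the formula $\IND(G\circ H)=\IND(G)\IND(H)$; it is the separate fact (Lemma~\ref{l:weight} in the paper) that in a well-covered graph every maximal bipartite subgraph has the same ``weight'' $2(\#\text{isolates})+\#(\text{non-isolates})=2\IND(G)$. Without this lemma the sufficiency direction does not go through, and your phrase ``one can replace this configuration'' suggests exhibiting another $B$ rather than computing the order of the given one, which is not what is required. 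The necessity direction in your sketch has the right flavour (compare the all-independent-support configuration against one with an edge in the support), but there too the clean comparison uses the same weight identity; deriving that $H$ is well-covered comes from varying the maximal independent set chosen in the $Y$-fibers.
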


\begin{proof}
Assume first that $G\circ H$ is well-bicovered.
Define a \dd{good} pair $(X,Y)$ as 
disjoint subsets $X$ and $Y$ of $V(G)$ such that 
$X$ is independent, there is no edge between $X$ and $Y$, 
and $Y$ induces a bipartite subgraph without isolates.
We say that a good pair is \dd{maximal} if there is no other good pair
$(X',Y')$ such that $X\subseteq X'$ and $Y\subseteq Y'$.
Note that a maximal good pair has the following property:
If $z$ is any vertex of $V(G)-(X\cup Y)$, and $z$ is not in $N(X)$, 
then since $z$ cannot be added to $X$ it must have a neighbor in $Y$,
and since $z$ cannot be added to $Y$, it must create an odd cycle with $Y$. 
By definition, any good pair can be extended to a maximal good pair.

Now, given a maximal good pair $P=(X,Y)$, one can construct a subset 
$B_P$ of $V(G\circ H)$ as follows. 
For every $H^x$-fiber where $x \in X$, 
take a maximal bipartite subgraph of $H$. 
For every $H^y$-fiber where $y \in Y$, take 
a maximal independent set of $H$. The resultant set $B_P$ is clearly bipartite.
Further:

\begin{claim}
The subgraph induced by the set $B_P$ is maximal bipartite. 
\end{claim}
\begin{proof}
Consider adding another vertex $v$ to $B_P$; say from the $H^w$-fiber.
If $w \in X$, then vertex $v$ creates an odd cycle with $B_P$, 
since we already took a maximal bipartite subgraph of such $H^w$.
If $w \in N(X)$, say adjacent to $x\in X$, 
then vertex $v$ creates a triangle with the vertices of $B_P$ in $H^x$,
since any maximal bipartite subgraph of $H^x$ has at least one edge.
If $w\in Y$, say adjacent to $y\in Y$, then vertex $v$ creates a triangle 
with a vertex of $B_P$ in $H^w$ and in $H^{y}$. 
Finally, by the maximality of the good pair, if $w\in V(G)-N[X]-Y$,
then vertex $v$ creates an odd cycle with $B_P$, since $w$ creates
an odd cycle with $Y$.
\end{proof}

Consider a good pair $P_1$ with $X$ a maximal
independent set of $G$ and $Y$ empty; necessarily $P_1$ is a maximal good pair.
Since all resultant sets $B_{P_1}$ must have the same size,
it follows that $H$ must be well-bicovered, and
$G$ must be well-covered. Further, 
every maximal bipartite subgraph of $G\circ H$ must
have size
\[
  |B_{P_1}| = \IND(G)\, b(H) .
\]

Consider a maximal good pair $P_2$ where $Y$ is nonempty and $X\cup Y$ is maximal 
bipartite. (This exists since $G$ has at least one edge: start with such an edge, 
extend to a maximal bipartite subgraph, and then partition into isolates and 
nonisolates.) Since every resultant set $B_{P_2}$ must have the same size, it follows 
that $H$ must be well-covered. Further, the resultant $B_{P_2}$ must have size
\[
    |B_{P_2}| = |X| b(H) + |Y| \IND(H) .
\]
By Lemma~\ref{l:weight}, it holds that $2|X|+ |Y| = 2 \IND(G)$. 
Thus the condition $|B_{P_1}|=|B_{P_2}|$ is equivalent to 
$|Y|\IND (H) = 2 |Y| b(H)$. Since $|Y|\neq 0$, it follows that 
(it is necessary
and sufficient that) $\IND(H) = 2 b(H)$.
That is, we have shown that conditions (i) and~(ii) are necessary.

Conversely, assume conditions (i) and (ii) hold. Let $B'$ be a maximal bipartite 
subgraph in the composition. Let $P$ be the subset of $V(G)$ in the projection of $B'$ 
onto~$G$. Say $X$ is the isolated vertices in the subgraph induced by $P$, and $Y$ the 
non-isolates. By the maximality of $B'$, for every $H^x$-fiber where $x \in X$, the set 
$B'$ must contain a maximal bipartite subgraph of $H$, and for every $H^y$-fiber where 
$y \in Y$, it must contain a maximal independent set of $H$. It follows that
\[
   |B'| = |X| b(H) + |Y| \IND(H) .
\]
Furthermore, the maximality of $B'$ means that $(X,Y)$ is a maximal
good pair and as above, by Lemma~\ref{l:weight} we get that $|Y| = 2(\IND(G) - |X|)$.
It follows that $B'$ has size  $\IND(G) b(H)$. Since this is
true for all maximal bipartite subgraphs of $G\circ H$, 
we have that $G\circ H$ is well-bipartite.
\end{proof}

\subsection{Cartesian product}

Recall that the Cartesian product of graphs $G$ and $H$, denoted $G\cartprod H$, is the 
graph with $V(G\cartprod H) = \{(u,v): u \in V(G) \text{ and } v \in V(H)\}$ and 
$(u,v)(x,y) \in E(G\cartprod H)$ if either $u = x$ and $vy \in E(H)$ or $ux \in E(G)$ 
and $v = y$. An obvious guess is that the product of two well-bicovered graphs is 
well-bicovered. However, this is far from true. Indeed, it does not hold even if one 
graph is $K_2$. For example, the house graph $H$ has bipartite number $4$, and so 
$b(K_2 \cartprod H) = 8$. But in the prism shown in Figure~\ref{f:prism}, the dark vertices represent a 
maximal bipartite subgraph order $7$:

\begin{figure}[h!]
\begin{center}
\includegraphics{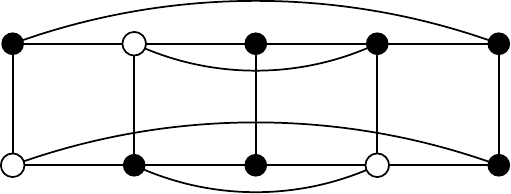}
\end{center}
\caption{A Cartesian product that is not well-bicovered}
\label{f:prism}
\end{figure}

One can at least observe that if $H$ is bipartite, then a necessary condition for
$G\cartprod H$ to be well-bicovered is that $G$ is well-bicovered. For, one can
build a maximal bipartite subgraph of the product by starting with any maximal
bipartite subgraph $B$ of $G$ and taking these vertices in all copies of $G$.
In order for the result to always be the same size, it is necessary that all
the $B$ have the same cardinality.

In \cite{HR-2013}, Hartnell and Rall showed that if $G\cartprod H$ is well-covered, 
then 
one of $G$ or $H$ must be well-covered. We do not know the answer to the 
analogous question: namely, 
if $G\cartprod H$ is well-bicovered, then must one of $G$ or $H$ be 
well-bicovered?

\section{$3$-Colorable Graphs}

As we mentioned earlier, 
Ravindra~\cite{Ravindra-1977} characterized the well-covered bipartite graphs.
So one might hope to classify well-bicovered $3$-colorable graphs, 
but this seems challenging. We note that in the 
case of well-covered bipartite graphs, one can trivially assume that every vertex is
in a $K_2$. So perhaps one can characterize well-bicovered $3$-colorable graphs 
where every vertex is in a triangle. We present here some partial results. We then
use these to characterize well-bicovered maximal outerplanar graphs.

\subsection{Triangles and simplicial vertices}

\begin{lemma} \label{l:3chrom}
Let $G$ be a $3$-colorable well-bicovered graph such that every vertex is in a triangle.
Then each color class $V_i$ in every proper $3$-coloring of $G$ has size $b(G)/2$. Further,
if every edge of $G$ is in a triangle, then each subgraph $G-V_i$ is well-covered. 
\end{lemma}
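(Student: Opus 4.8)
The plan is to exploit the fact that a proper $3$-coloring gives three independent sets $V_1,V_2,V_3$, and each union $V_i\cup V_j$ is a bipartite subgraph. First I would argue that each $V_i\cup V_j$ is in fact a \emph{maximal} bipartite subgraph: since every vertex $z$ lies in a triangle, and a triangle uses all three colors, the vertex $z$ (whatever its color) has a neighbor in each of the two classes making up $V_i\cup V_j$ whenever $z\notin V_i\cup V_j$ — that is, $z\in V_k$ with $\{i,j,k\}=\{1,2,3\}$ — so adding $z$ creates an odd cycle (a triangle). Hence $G$ being well-bicovered forces $|V_i|+|V_j| = b(G)$ for all three choices of the pair, and solving the three equations $|V_1|+|V_2| = |V_1|+|V_3| = |V_2|+|V_3| = b(G)$ gives $|V_i| = b(G)/2$ for each $i$. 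This handles the first assertion.

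For the second assertion, assume additionally every edge lies in a triangle, and fix a color class $V_i$, say $V_1$. The graph $G-V_1$ is bipartite with the natural bipartition $(V_2,V_3)$, and I want to show it is well-covered, i.e.\ every maximal independent set of $G-V_1$ has size $\alpha(G-V_1)$. The key step is to relate independent sets of $G-V_1$ to maximal bipartite subgraphs of $G$: given a maximal independent set $S$ of $G-V_1$, I claim $V_1\cup S$ is a maximal bipartite subgraph of $G$. It is bipartite since $S$ is independent and $V_1$ is independent, giving the bipartition $(V_1,S)$ — there are no edges within $V_1$ and none within $S$. For maximality, take any $z\notin V_1\cup S$; then $z\in V_2\cup V_3$, so $z$ lies in $G-V_1$, and since $S$ is a maximal independent set there, $z$ has a neighbor $s\in S$. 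The edge $zs$ lies in a triangle, whose third vertex has the remaining color — but one of $z,s$ is in $V_2$ and the other in $V_3$ (they are adjacent and both outside $V_1$), so the third vertex is in $V_1\subseteq V_1\cup S$; thus adding $z$ creates a triangle. Hence $V_1\cup S$ is maximal bipartite, so $|V_1\cup S| = b(G)$, giving $|S| = b(G) - |V_1| = b(G)/2$. Since this holds for \emph{every} maximal independent set $S$ of $G-V_1$, all such sets have the same size, and $G-V_1$ is well-covered.

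I would also note, for cleanliness, that one should check $G-V_1$ has no isolated-vertex subtleties and that every vertex of $G-V_1$ really does lie in $G-V_1$ (trivial) and that the claim ``one of $z,s$ is in $V_2$, the other in $V_3$'' genuinely uses that $z,s$ are adjacent in the proper coloring. The main obstacle — really the only delicate point — is the maximality argument for $V_1\cup S$: one must be careful that the triangle through edge $zs$ has its third vertex inside $V_1\cup S$ rather than somewhere outside, and this is exactly where the hypothesis ``every edge is in a triangle'' (as opposed to merely every vertex) is essential. Everything else is a short computation with the three cardinality equations.
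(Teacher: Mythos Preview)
Your proof is correct and follows essentially the same approach as the paper: show each two-color union $V_i\cup V_j$ is a maximal bipartite subgraph using the triangle through any excluded vertex, deduce equal class sizes, and then for the second part show that $V_i\cup S$ is maximal bipartite for any maximal independent set $S$ of $G-V_i$ by using the triangle through the edge $zs$. The only cosmetic difference is that the paper writes $T_i=G-V_i$ rather than $V_j\cup V_k$, and your commentary correctly identifies the one place where the stronger ``every edge in a triangle'' hypothesis is needed.
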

\begin{proof}
Let $T_i = G- V_i$ for each $1\le i \le 3$. Because the subgraph $T_i$ is induced by two color classes,
it is bipartite. If $w$ is any other vertex, then it has color $i$;
by construction $T_i$ contains all the neighbors of $w$ and so
adding $w$ to $T_i$ would create a triangle with~$T_i$. That is,
 $T_i$ is a maximal bipartite subgraph. 
Therefore, $|T_1|=|T_2|=|T_3|$.
It follows that $|V_1|=|V_2|=|V_3|$;
and indeed, each is $b(G)/2$.

Now, suppose every edge is in a triangle.
We can build a bipartite subgraph $B$ of $G$ by starting with 
the color class~$V_i$ and then adding 
a maximal independent set $S$ of~$T_i$. 
Consider some other vertex $x$ of~$T_i$. Then $x$ has a neighbor $y$ in $S$; further,
the edge $xy$ is in a triangle, say with vertex~$z$, where $z$ is in $V_i$.
That is, if we add $x$ to $B$ 
we complete a triangle. It follows that $B$ is maximal bipartite.
Since all such $B$ must have the same cardinality, we get that $T_i$ is well-covered.
\end{proof}

Recall that a \dd{simplicial vertex} is one whose neighborhood is complete.
Prisner, Topp, and Vestergaard~\cite{PTV-1996} considered simplicial vertices in well-covered graphs.
In particular, they defined a \dd{simplex} as a maximal clique containing a simplicial vertex,
and showed that if a graph is well-covered then the simplices are vertex-disjoint, and if
every vertex belongs to exactly one simplex then the graph is well-covered.

It is unclear what the exact analogue of their results should be. 
For example, Lemma~\ref{l:triangleOnEdge}
showed the well-bicovered graphs can have overlapping simplices. But here are two results
in that spirit. 

\begin{lemma}
Suppose graph $G$ has vertices $u$ and $v$ that are nonadjacent simplicial vertices 
with $N(u)\cap N(v)$ nonempty, and there exist distinct vertices $x\in N(u)$ and $y\in N(v)$ 
that are nonadajcent.
Then $G$ is not well-bicovered.
\end{lemma}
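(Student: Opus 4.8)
The plan is to exhibit two maximal bipartite (induced) subgraphs of $G$ of different orders. Fix a vertex $c \in N(u) \cap N(v)$. I would start with the routine structural observations. Since $N(u)$ is a clique containing both $x$ and $c$, and $N(v)$ is a clique containing both $y$ and $c$, it follows that $c$ is adjacent to $x$ and to $y$; moreover $c \ne x$ and $c \ne y$, because (for instance) $c = x$ would place $x$ in the clique $N(v)$ alongside $y$ and force $xy \in E(G)$. Also, since $x$ and $y$ are nonadjacent while $N(u)$ and $N(v)$ are cliques, $x \notin N(v)$ and $y \notin N(u)$, and together with $uv \notin E(G)$ this gives $x,y \notin\{u,v\}$. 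Hence $\{c,x,y\}$ is a set of three distinct vertices inducing the path $x\,c\,y$, while $\{u,c,x\}$ and $\{v,c,y\}$ induce triangles. (In effect $G[\{u,v,c,x,y\}]$ is a ``bowtie'' with centre $c$, whose maximal induced bipartite subgraphs are $\{u,x,v,y\}$ of order $4$ and the three order-$3$ sets containing $c$; the rest of the argument carries this discrepancy into $G$.)

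Next I would choose the right maximal bipartite subgraph to work with. Let $B$ be a maximal bipartite subgraph of $G$ with $\{c,x,y\}\subseteq V(B)$ — one exists since the induced path $x\,c\,y$ extends to a maximal bipartite subgraph. The triangles $ucx$ and $vcy$ force $u\notin V(B)$ and $v\notin V(B)$. The key point — and the reason for insisting that $B$ contain $x$ and $y$, rather than taking an arbitrary $B$ with $c\in V(B)$ — is that $B$ is triangle-free, so it meets the clique $N[u]$ in at most two vertices; since $c$ and $x$ are two such, $V(B)\cap N(u) = \{c,x\}$ exactly, and likewise $V(B)\cap N(v)=\{c,y\}$.

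Now I would do the surgery. Put $B' := \bigl(V(B)\setminus\{c\}\bigr)\cup\{u,v\}$. By the previous step, the only neighbour of $u$ in $V(B)\setminus\{c\}$ is $x$, the only neighbour of $v$ there is $y$, and $uv\notin E(G)$; hence $G[B']$ is obtained from the bipartite graph $G[V(B)\setminus\{c\}]$ by attaching $u$ as a pendant at $x$ and $v$ as a pendant at $y$, so $G[B']$ is bipartite of order $|V(B)|+1$. Extending $B'$ to a maximal bipartite subgraph $B''$ of $G$ yields a maximal bipartite subgraph with $|V(B'')|\ge |V(B)|+1 > |V(B)|$; since $B$ is itself a maximal bipartite subgraph, $G$ is not well-bicovered.

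I expect the only real obstacle to be exactly the bookkeeping that makes the swap local: for a general maximal bipartite $B$ containing $c$, replacing $c$ by $u$ and $v$ could create an odd cycle running through vertices of $B$ far from $u$ and $v$. Forcing $x,y\in V(B)$ avoids this, because simpliciality then pins down $V(B)\cap N(u)$ and $V(B)\cap N(v)$ completely, so $u$ and $v$ re-enter the subgraph only as pendant vertices.
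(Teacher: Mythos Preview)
Your proof is correct and follows essentially the same approach as the paper's: start with $\{c,x,y\}$, extend to a maximal bipartite subgraph $B$, observe via simpliciality that $B$ picks up no further vertices of $N(u)\cup N(v)$, and then swap $c$ out for $u$ and $v$ to obtain a strictly larger bipartite set. Your write-up is in fact more careful than the paper's in verifying the distinctness of the five vertices and in spelling out why $u$ and $v$ re-enter only as pendants.
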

\begin{proof}
Let $c\in N(u) \cap N(v)$. Note that the condition implies that $x\notin N(v)$ and
$y\notin N(u)$.
Take the set $\{c,x,y\}$ and extend to a maximal bipartite set $B$. Necessarily, the 
set~$B$ cannot contain $u$ or $v$ nor any other vertex of $N(u) \cup N(v)$.
Now, let $B'$ be the set $(B\cup\{u,v\})-\{c\}$. Then this set is bipartite
and bigger than $B$, a contradiction.
\end{proof}

Define a \dd{bisimplex} $S$ as a maximal clique that contains a simplicial vertex
$s$ (implying it has no neighbor outside $S$) and a second vertex $t$ that has at 
most one neighbor outside~$S$.
 
\begin{lemma}  \label{l:biSimplex}
If the vertex set of graph $G$ has a partition into bisimplices,
then $G$ is well-bicovered.
\end{lemma}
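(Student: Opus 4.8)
The plan is to pin down exactly how a maximal bipartite subgraph meets each part of the partition. Write $V(G) = S_1 \cup \cdots \cup S_k$ with each $S_i$ a bisimplex, having simplicial vertex $s_i$ and second vertex $t_i$; note that $|S_i| \ge 2$ since $s_i \ne t_i$. I claim that every maximal bipartite subgraph $B$ of $G$ satisfies $|V(B) \cap S_i| = 2$ for every $i$. Once this is established, $|V(B)| = 2k$ for every maximal bipartite $B$, so all of them have the same order and $G$ is well-bicovered.

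The upper bound is immediate: each $S_i$ is a clique, so $B$ cannot contain three of its vertices without inducing a triangle, whence $|V(B) \cap S_i| \le 2$.

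For the lower bound I argue by contradiction: suppose $|V(B) \cap S_i| \le 1$ for some $i$, and write $s = s_i$, $t = t_i$. The goal is to exhibit a vertex of $S_i$ whose addition to $B$ leaves it bipartite, contradicting maximality. Recall that $N(s) = S_i \setminus \{s\}$ (since $S_i$ is a maximal clique and $s$ is simplicial) and that $t$ has at most one neighbor $p$ outside $S_i$ (possibly none). If $s \notin V(B)$, add $s$: in $G[V(B) \cup \{s\}]$ its only possible neighbor is the at most one vertex of $V(B) \cap S_i$, so $s$ has degree at most $1$ and lies on no cycle, and the subgraph stays bipartite. If instead $V(B) \cap S_i = \{s\}$, then $t \notin V(B)$, and I add $t$ instead: in $G[V(B) \cup \{t\}]$ the vertex $t$ is adjacent only to $s$ and possibly $p$, so it has degree at most $2$; moreover $s$ is isolated in $B$ because $N(s) = S_i \setminus \{s\}$ is disjoint from $V(B)$, so there is no path from $s$ to $p$ inside $B$, hence $t$ lies on no cycle and the subgraph stays bipartite. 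Either way $B$ fails to be maximal, a contradiction, so $|V(B) \cap S_i| = 2$ after all.

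The only delicate point --- and the place where the definition of bisimplex genuinely earns its keep --- is the second case above, where $B$ already contains the simplicial vertex $s_i$: then $s_i$ cannot be reused (and is in fact isolated inside $B$), so one must fall back on the \emph{second} special vertex $t_i$, and here the hypothesis that $t_i$ has at most one neighbor outside $S_i$ is exactly what prevents $t_i$ from closing an odd cycle. The remaining parts of the argument are routine counting, so I anticipate no further obstacles.
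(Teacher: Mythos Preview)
Your proof is correct and follows essentially the same approach as the paper: show that every maximal bipartite subgraph meets each bisimplex $S_i$ in exactly two vertices, using $s_i$ (when $s_i\notin B$) or $t_i$ (when $V(B)\cap S_i=\{s_i\}$) as the vertex to adjoin. Your handling of the second case is in fact a bit more explicit than the paper's---you spell out that $s_i$ is isolated in $B$, so adjoining $t_i$ closes no cycle---and you streamline by omitting the paper's preliminary construction of an explicit maximum bipartite subgraph of order $2k$.
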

\begin{proof}
Let the partition of $V(G)$ into bisimplices be $S_1, \ldots, S_m$, 
with $s_i$ the simplicial vertex of $S_i$ and $t_i$ the other relevant vertex of $S_i$.
Let $B$ be the subgraph induced by all the $s_i$ and $t_i$.
Then $B$ is bipartite: indeed, every component in $B$ is a path either of 
length $1$ (of the form $s_it_i$) or of length $3$ (of the form $s_it_it_js_j$).
Since one cannot take more than two vertices
from each bisimplex, it follows that $b(G) = 2m$.

Now consider any maximal bipartite subgraph $B'$ of $G$. Suppose $B'$ contains no vertex from
some bisimplex $S_i$. Then one can add the vertex $s_i$ to $B'$ and preserve bipartiteness. 
Suppose $B'$ contains only one vertex from $S_i$. If that vertex is not $s_i$, then one can 
just add it. If that vertex is $s_i$, then one can add $t_i$,
as it is adjacent to at most one other vertex in $B'$. In either case this contradicts
the claimed maximality of~$B'$. 
It follows that $B'$ contains two vertices from each bisimplex, and so has cardinality~$2m$.
\end{proof}

\subsection{Maximal outerplanar graphs}

Recall that graph $G$ is \dd{outerplanar} if $G$ has a planar drawing for which all 
vertices belong to the outer face of the drawing. 
We say that $G$ is \dd{maximal outerplanar}, or a \dd{MOP}, if $G$ is outerplanar and 
the addition of any edge results in a graph that is not outerplanar. 
Since every MOP is a $2$-tree and every $2$-tree is chordal, we point out that 
Prisner, Topp, and Vestergaard~\cite{PTV-1996} classified well-covered chordal graphs. 
In this section, we classify all well-bicovered MOPs. 

We construct a class of graphs $W$ referred to as \dd{whirlygigs} as follows. 
Take a MOP $M$ with $m\ge 3$ vertices and let $C$ represent the outside cycle of $M$. 
For each edge $u_iu_{i+1}$ on $C$, add a new vertex $t_i$ 
adjacent only to $u_{i}$ and $u_{i+1}$, and then add 
another vertex $s_i$ that is adjacent to only $t_i$ and $u_i$. 
Figure~\ref{fig:whirlygig} shows the whirlygig of order $15$ 
(there is only one MOP of order $5$). 
We can extend this definition to $m=2$ by considering the MOP $K_2$ to be a cycle with 
two edges; the resultant whirlygig is $P^2_6$, the square of the path.

\begin{figure}[h!]
\begin{center}
\begin{tabular}{c}
\includegraphics{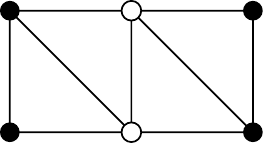} 
\end{tabular}
\qquad\qquad
\begin{tabular}{c}
\includegraphics{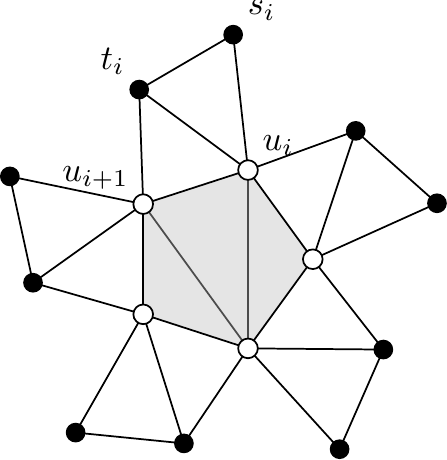}
\end{tabular}
\caption{The unique whirlygigs of orders $6$ and $15$}
\label{fig:whirlygig}
\end{center}
\end{figure}

\begin{lemma}
If $G \in W$, then $G$ is well-bicovered.
\end{lemma}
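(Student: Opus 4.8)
The plan is to exhibit an explicit partition of $V(G)$ into bisimplices and then invoke Lemma~\ref{l:biSimplex}. Write the outside cycle of the underlying MOP $M$ as $C = u_1 u_2 \cdots u_m u_1$ (indices mod $m$, so $u_{m+1} = u_1$), and for each $i$ let $t_i$ and $s_i$ be the two vertices added for the edge $u_i u_{i+1}$, as in the definition of a whirlygig. Then $V(G)$ is the disjoint union of the $m$ sets $S_i = \{u_i, t_i, s_i\}$ for $1 \le i \le m$, so it suffices to check that each $S_i$ is a bisimplex; the lemma then yields the conclusion (and incidentally gives $b(G) = 2m$).

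To verify that $S_i$ is a bisimplex I would proceed through the four requirements in turn. First, $S_i$ is a clique — indeed a triangle — since $u_i t_i$, $u_i s_i$, and $t_i s_i$ are all edges of $G$ by construction. Second, $S_i$ is a maximal clique: the vertex $s_i$ has exactly the two neighbors $t_i$ and $u_i$, both lying in $S_i$, so no vertex outside $S_i$ is adjacent to $s_i$, whence no clique strictly contains $S_i$. Third, $s_i$ is simplicial, as $N(s_i) = \{t_i, u_i\}$ is itself a clique. Fourth, the vertex $t_i$ has neighborhood $N(t_i) = \{u_i, u_{i+1}, s_i\}$, of which only $u_{i+1}$ lies outside $S_i$, so $t_i$ has at most one neighbor outside $S_i$. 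Hence $S_i$ is a bisimplex with simplicial vertex $s_i$ and second distinguished vertex $t_i$, and since $\{S_1, \dots, S_m\}$ partitions $V(G)$, Lemma~\ref{l:biSimplex} shows $G$ is well-bicovered.

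I do not expect a genuine obstacle here: the proof is a direct verification once the right partition is spotted. The only points requiring attention are that the whirlygig construction forces the neighborhoods $N(s_i) = \{t_i, u_i\}$ and $N(t_i) = \{u_i, u_{i+1}, s_i\}$ \emph{exactly} (there are no stray adjacencies among the added vertices), and that the same argument applies verbatim in the degenerate case $m = 2$, where $M = K_2$ is regarded as a $2$-cycle and the two copies of the edge $u_1 u_2$ each contribute their own pair $t_i, s_i$, so that $P_6^2$ likewise decomposes into two bisimplices.
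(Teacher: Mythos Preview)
Your proof is correct and follows exactly the paper's approach: both exhibit the partition $\{s_i,t_i,u_i\}$ into bisimplices and invoke Lemma~\ref{l:biSimplex}. Your version simply supplies the routine verifications (that each $S_i$ is a maximal clique, that $s_i$ is simplicial, that $t_i$ has a unique outside neighbor $u_{i+1}$) that the paper leaves implicit.
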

\begin{proof}
This follows from Lemma~\ref{l:biSimplex}.
Each $\{s_i,t_i,u_i\}$ is a bisimplex, and these partition the vertex set of $G$.
\end{proof}

\begin{theorem} 
A MOP $G$ is well-bicovered if and only if $G$ is $K_3$, the Haj\'{o}s graph, or a 
whirlygig. 
\end{theorem}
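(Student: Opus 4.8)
The plan is to characterize well-bicovered MOPs by induction on the order, using the structure of MOPs as sequential stackings of triangles on outer edges (equivalently, $2$-trees built on a base edge). First I would dispose of small cases directly: the only MOPs on at most $4$ vertices are $K_3$ and $K_4-e$, and one checks by hand that $K_3$ is well-bicovered while $K_4-e$ is not (it has $b=3$ but also a maximal bipartite subgraph of order $2$, namely the two degree-$2$ vertices give a larger one, so one checks the actual maximal ones). Then I would set up the forward direction: assume $G$ is a well-bicovered MOP of order $n\ge 5$ that is not $K_3$ and not the Haj\'os graph, and show $G$ is a whirlygig.

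The key structural tool is to look at the ``ears'' of the MOP, i.e. the degree-$2$ vertices, each of which is a simplicial vertex lying in a triangle with two consecutive outer-cycle vertices. Let $e$ be a degree-$2$ vertex with neighbors $a,b$ (so $ab$ is an outer edge). I would analyze the local structure around $ab$: whether $a$ or $b$ has other neighbors, and what those look like. The idea is to use the two simplicial-vertex lemmas just proved — in particular, the lemma forbidding two nonadjacent simplicial vertices $u,v$ with $N(u)\cap N(v)\neq\emptyset$ and noncomparable neighborhoods — to force that distinct ears are ``far apart'' and attached in a very rigid way. Concretely, if $e$ and $e'$ are two ears sharing a common neighbor, that lemma (or a direct enlargement argument like the one in its proof) should give a contradiction unless $G$ is small, which pins down how triangles can be stacked. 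Once the attachment pattern of ears is rigid, I would peel off a bisimplex of the form $\{s,t,u\}$ (an ear $s$, the vertex $t$ it hangs on, and the further neighbor $u$ of $t$), delete it, check the smaller graph is still a MOP and still well-bicovered (using that $b$ drops by exactly $2$, which follows from the bisimplex / good-extension bookkeeping as in Lemma~\ref{l:biSimplex}), and apply induction; reattaching recovers the whirlygig structure, with the ``core'' MOP $M$ emerging from the induction.

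I expect the main obstacle to be the base/degenerate cases of the induction and making the ``peeling'' step clean: one must verify that after removing a bisimplex the remaining graph is \emph{still a MOP} (not merely outerplanar) and is still well-bicovered, and that the removed piece really had the whirlygig shape $\{s_i,t_i,u_i\}$ rather than some other $3$-vertex configuration — this is where the Haj\'os graph appears as the genuine exception, since there the three ears are mutually entangled and no bisimplex can be peeled. I would handle this by a careful case analysis on the neighbors of a chosen ear $e=\{a,b\}$: (i) if both $a$ and $b$ have degree $2$ then $G=K_3$; (ii) if exactly one, say $b$, has degree $2$ in $G$, examine $a$'s other neighbors; (iii) if both $a,b$ have higher degree, then since $ab$ is an outer edge of a MOP, $ab$ lies in exactly one other triangle $abc$, and I would argue $c$ together with the ear forces either the Haj\'os configuration or a removable bisimplex. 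The reverse direction is already done (the whirlygig lemma and Lemma~\ref{l:triangleOnEdge} for the Haj\'os graph, and $K_3$ trivially), so only a sentence is needed there.
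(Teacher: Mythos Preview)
Your inductive peeling scheme has a genuine gap at exactly the step you flag as the obstacle: removing a bisimplex from a well-bicovered MOP does \emph{not} in general leave a well-bicovered MOP, so the induction hypothesis cannot be invoked. Take the whirlygig on $9$ vertices (core $M=K_3$). Deleting the bisimplex $\{s_1,t_1,u_1\}$ leaves a $6$-vertex graph with only $8$ edges, hence not maximal outerplanar (a MOP on $6$ vertices has $9$ edges); the vertex $t_3$, which was adjacent to $u_1$, drops to degree $2$ and its triangle with the core is destroyed. If instead you peel only $\{s_1,t_1\}$, the remaining graph \emph{is} a $7$-vertex MOP, but it cannot be well-bicovered: by Lemma~\ref{l:3chrom} every well-bicovered MOP has order divisible by $3$. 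So neither version of ``peel and recurse'' closes. The simplicial-vertex lemma you plan to lean on is also too weak here: in the Haj\'os graph (and in larger whirlygigs) pairs of ears do share a neighbor while the ``other'' neighbors are adjacent, so that lemma gives no contradiction and does not by itself force the rigid attachment pattern you need.

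The paper's argument avoids induction entirely and instead exploits the $3$-coloring. Since every edge of a MOP lies in a triangle, Lemma~\ref{l:3chrom} gives that each $T_i=G-V_i$ is a well-covered tree, hence (Ravindra) a corona. One then classifies each vertex by whether it is a leaf or non-leaf in the two coronas containing it (types LL, LN, NN) and reads off the whirlygig structure directly from how leafy triangles must chain around the outer cycle; the Haj\'os graph falls out of the case $\mathrm{LN}=\emptyset$. If you want to salvage your approach, the missing global ingredient is precisely this corona structure --- it is what forces the ears to sit in the $\{s_i,t_i,u_i\}$ pattern around a central MOP, rather than being recoverable one bisimplex at a time.
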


\begin{proof}
Let $G$ be a well-bicovered MOP. It is well-known that outerplanar graphs are $3$-colorable. 
Let $V_1, V_2$, and $V_3$ be the 
color classes of $G$. Let $T_i = G- V_i$ for each $1\le i \le 3$. 
Note that $T_i$ is a tree.
(This follows for example from the fact that $G$ is chordal and 
therefore the subgraph induced by the vertices of a cycle cannot be 
$2$-colored.) By Lemma~\ref{l:3chrom}, we know that 
the cardinality of $V_1, V_2$, and $V_3$ must be equal; say 
$|V_i| = m$ for $1\le i \le 3$. Moreover, each $T_i$ is 
well-covered. By the characterization of Ravindra~\cite{Ravindra-1977},
it follows that each $T_i$ is a corona of a tree. 

We partition $V(G)$ into three sets. Let LL represent the vertices in $G$ that are a leaf 
with respect to $T_i$ and $T_j$ for some $1 \le i < j \le 3$. Let LN represent the vertices 
that are a leaf with respect to $T_i$ and a non-leaf with respect to $T_j$, and let NN 
represent the vertices that are a non-leaf with respect to $T_i$ and $T_j$
for some $1 \le i < j \le 
3$. We know that the only MOP of order three is $K_3$. So we may assume that 
$|V(G)| \ge 6$.\smallskip

\textbf{Case 1.}
Suppose first that $G$ contains a vertex $v_1$ in LN. Without loss of generality, we may 
assume $v_1$ has color $1$, is a leaf in $T_2$ and is a non-leaf in $T_3$. Thus, $v_1$ has 
exactly one neighbor of color $3$, say $x_1$, and at least two neighbors of color $2$, say 
$w_1$ and $x_2$. 

Since in a MOP the open neighborhood of a vertex induces a path, any vertex 
must have almost equal representation of the other two colors in its neighborhood. It follows 
that $v_1$ has exactly two neighbors of color $2$, and in particular, its open neighborhood 
induces the path $w_1x_1x_2$. Further, one of $w_1$ and $x_2$ is a leaf in $T_3$, say $w_1$. 
Since $w_1$ has only one neighbor of color $1$, it must be that the edge $w_1x_1$ is an 
exterior edge. Since $v_1$ has only one neighbor of color $3$, the edge $v_1w_1$ is also an 
exterior edge. In particular, $w_1$ has degree $2$ in $G$.

\begin{center}
\includegraphics[scale=1.00]{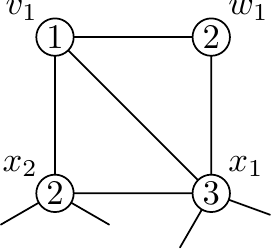}
\end{center}

Note that the above argument holds for any vertex that is in LN. We refer to $v_1w_1x_1$ as a 
\dd{leafy triangle}
as all three vertices are leaves in their respective coronas. Note too that 
any vertex in $G$ is incident with exactly one pendant edge from two different coronas $T_i$ 
and $T_j$. In particular, $x_1$ is not contained in another leafy triangle of $G$.

Next, consider vertex $x_2$. Since $w_1$ is a leaf in both $T_1$ and $T_3$, it follows that 
$x_2$ must be a non-leaf in both $T_1$ and $T_3$. Therefore, $x_2$ has a leaf-neighbor in 
$T_1$ and a leaf-neighbor in $T_3$. We claim that one of these leaf-neighbors, call it $v_2$, 
must be in LN. Indeed, if both leaf-neighbors were in LL, then they would each have degree 
$2$ in $G$ making $x_2$ a cut-vertex which cannot happen. 

Applying the above argument where 
$v_2$ plays the role of $v_1$, it follows that $v_2$ has exactly two neighbors other than 
$x_2$, say $w_2$ and $x_3$, where $x_2w_2v_2x_3$ is a path on the exterior of $G$ and $x_2$, 
$w_2$, and $v_2$ induce a leafy triangle. Continuing this same line of reasoning, we 
establish the exterior path $P = x_1w_1v_1x_2w_2v_2\dots x_kw_kv_k$ where each $x_i$ is in 
NN, each $w_i$ is in LL, and each $v_i$ is in LN. Furthermore, from above we know that $v_k$ 
has degree $3$ in $G$ and is adjacent to $x_k$ and $w_k$. 

Let $k$ be the first index where 
the third neighbor of $v_k$ is $z$ which is already on $P$. Since each $w_i$ has degree $2$ 
in $G$, and each $v_i$ is only adjacent to $w_i$, $x_i$, and $x_{i+1}$, it must be that $z = 
x_i$ for some $1\le i \le k$. If $k=2$, then $G$ is the graph depicted $P^2_6$. So we may 
assume that $k\ge 3$. If $i \ne 1$, then $x_i$ is a cut-vertex as the exterior edges of $G$ 
contain the cycle $x_iw_iv_i\dots x_kw_kv_kx_i$. It follows that all vertices of NN are on 
the cycle $x_1x_2\dots x_kx_1$. Moreover, the vertices in NN must induce a MOP, and thus  
$G$ is in fact a whirlygig.\smallskip

\textbf{Case 2.}
Next, suppose LN $= \emptyset$. Let $v$ be a vertex in LL with color $1$. Thus, $v$ has 
degree $2$ in $G$. Let $u$ be the neighbor of $v$ with color $2$ and let $w$ be the neighbor 
of $v$ with color $3$. It follows that $u$ and $w$ are adjacent in $G$. Since $w$ is not a 
leaf in $T_1$, $w$ has a neighbor, call it $x$, in LL that has degree $2$ and color $2$. Let 
$z$ be the other neighbor of $x$ which is necessarily in NN, has color $1$ and is also 
adjacent to $w$. Continuing this same line of reasoning, we deduce that the exterior edges of 
$G$ can be expressed as $v_1w_1v_2w_2\cdots v_kw_kv_1$ where each $v_i$ is in LL and each 
$w_i$ is in NN. Further, $G$ contains the cycle $C= w_1w_2\cdots w_kw_1$ as each $v_i$ has 
degree $2$ in $G$. Without loss of generality, we may assume $v_1$ has color $1$ and $w_1$ 
has color $2$. From the above argument, it follows that $v_2$ has color $3$, $w_2$ has color 
$1$ and so forth. This implies that on $C$, the order of the colors starting with the color 
of $w_1$ is $2, 1, 3, 2, 1, 3, \dots, 2, 1, 3$.

Let $H$ be the MOP induced by the vertices of NN. We claim that based on the pattern of 
colors on $C$, each vertex of degree $3$ or more in $H$ is adjacent to a vertex of degree $2$ 
in $H$. Indeed, let $uvw$ be on $C$ where $u$ has color $2$, $v$ has color $1$ and $w$ has 
color $3$. We shall assume that $v$ has degree at least $3$ in $H$ and that $v$ is adjacent 
to a vertex $z \ne u$ with color $2$. Thus, on $C$ vertex $z$ is followed by a vertex $t$ 
with color 
$1$. However, either $uz$ or $vt$ must be an edge in $H$, which is a contradiction based on 
their color assignment. Thus, either $u$ has degree $2$, or the only neighbor of $v$ with 
color $2$ is $u$. However, assuming that $u$ is adjacent to a vertex $p\ne w$ with color $3$, 
the same argument implies that $w$ has degree $2$ in $H$.

Next, we claim that the subgraph $J$ of $G$ containing all vertices of degree $2$ in $H$ 
along with all vertices in LL in $G$ is maximal bipartite. Indeed, for $v \in V(G) - V(J)$, 
$v$ is in NN and therefore contained in a triangle $vxw$ where $x$ has degree $2$ in $H$ and 
$w$ is in LL. Thus, $|V(J)| = \frac{2}{3}n = |\textrm{LL}| + x = \frac{n}{2} + x$ where $x$ 
represents the number of vertices in $H$ of degree $2$. It follows that $x = \frac{n}{6}$ and 
every third vertex on $C$ is a vertex of degree~$2$ in $H$. Recoloring if necessary, we may 
assume every vertex of degree $2$ in $H$ has color~$1$ and each vertex of $H$ with color $2$ 
or $3$ has degree at least $3$ in $H$. If $|V(H)| = 3$, then $G$ is the Haj\'{o}s graph. 
However, if $|V(H)| \ge 6$, then $H$ does not induce a MOP as each vertex of color $2$ is 
adjacent to exactly one vertex of color $1$ and therefore only two vertices of color $3$,
and so that case is impossible. 
\end{proof}

\section{Further Thoughts}

Apart from the questions mentioned in the text, there are several
natural questions yet to be resolved. For example, it would be nice
to characterize well-bicovered planar or outerplanar graphs of given girth,
or well-bicovered triangulations. Another obvious direction is to 
establish the complexity of recognizing well-bicovered graphs.


\bibliography{wellBi}
\bibliographystyle{elsart-num-sort}

\end{document}